\newif\ifdviwin
\newif\ifdviwin
\def\cC{\mathcal{C}}
\def\cB{\mathcal{B}}
\def\m2r{\mathbb{M}^2\times\mathbb{R}}
\def\h2r{\mathbb{H}^2\times\mathbb{R}}
\def\cte.{\mathop{\rm cte.}\nolimits}
\def\div{\mathop{\rm div }\nolimits}
\def\cosh{\mathop{\rm cosh }\nolimits}
\def\tanh{\mathop{\rm tanh }\nolimits}
\def\R{\mathbb{R}}
\def\cB{\mathcal{B}}
\def\cN{\mathcal{N}}
\def\D{\mathbb{D}}
\def\H{\mathbb{H}}
\def\S{\mathbb{S}}
\def\arctanh{\mathrm{arctanh}}
 \newtheorem{teo}{Theorem}[section]
  \newtheorem{defi}[teo]{Definition}
 \newtheorem{pro}[teo]{Proposition}
 \newtheorem{lem}[teo]{Lemma}
 \newtheorem{obs}[teo]{Observation}
 \newenvironment{proof}{\rm \trivlist \item[\hskip \labelsep{\it
      Proof}:]}{\nopagebreak \hfill $\Box$ \endtrivlist}
\numberwithin{equation}{section}
\begin{document}

\mbox{}\vspace{0.4cm}

\begin{center}
\rule{17cm}{1.5pt}\vspace{0.5cm}

\renewcommand{\thefootnote}{\,}
{\Large \bf Translating solitons of the mean curvature flow in the space $\h2r$ \footnote{\hspace{-.75cm} Mathematics Subject
Classification: 53A10, 53C42}}\\ \vspace{0.5cm} {\large Antonio Bueno}\\ \vspace{0.3cm} \rule{17cm}{1.5pt}
\end{center}
  \vspace{1cm}
Departamento de Geometría y Topología, Universidad de Granada,
E-18071 Granada, Spain. \\ e-mail: jabueno@ugr.es \vspace{0.3cm}

\begin{abstract}
In this paper we study the theory of translating solitons of the mean curvature flow of immersed surfaces in the product space $\h2r$. We relate this theory to the one of manifolds with density, and exploit this relation by regarding these translating solitons as minimal surfaces in a conformal metric space. Explicit examples of these surfaces are constructed, and we study the asymptotic behavior of the existing rotationally symmetric examples. Finally, we prove some uniqueness and non-existence theorems.
\end{abstract}
 
\section{Introduction}

Let $M$ be an orientable, immersed surface in the product space $\h2r$. We will say that $M$ is a \emph{translating soliton} if the mean curvature $H_M$ of $M$ satisfies at each $p\in M$
\begin{equation}\label{hyperbolicsoliton}
H_M(p)\eta_p=\partial_z^\bot.
\end{equation}
Here, $(\cdot)^\bot$ denotes the \emph{normal component}, $\partial_z$ is the unit vertical Killing vector field in $\h2r$ and $\eta$ is a unit normal vector field defined on $M$. Throughout this paper we will denote by $\langle\cdot,\cdot\rangle=g_{\H^2}+dz^2$ to the product metric in $\h2r$; here $g_{\H^2}$ is the metric in $\H^2$ of constant curvature $-1$. Notice that the above equation can be rewritten as
\begin{equation}\label{mediapresc}
H_M(p)=\langle\eta_p,\partial_z\rangle,
\end{equation} 
where the scalar quantity $\langle \eta, \partial_z\rangle$ is the \emph{angle function} of the surface $M$ computed with respect to $\eta$ and the $e_3$ direction, and will be denoted by $\nu(p):=\langle \eta_p, \partial_z\rangle$ for all $p\in M$. 

Our objective in this paper is to take as a starting point the well studied theory of \emph{translating solitons} of the mean curvature flow (MCF for short) in the Euclidean space $\R^3$, and use some of the known examples of translating solitons in $\h2r$ to obtain uniqueness and non-existence theorems, see \cite{CSS,Hu,HuSi,Il,MPGSHS,MSHS,Sm,SpXi} for relevant works regarding translating solitons of the MCF in $\R^3$. First, recall some basic notions about the MCF in $\R^3$. Let $\psi:M\rightarrow\R^3$ be an immersion of an orientable surface $M$ in the Euclidean space $\R^3$. Define by $\psi_t(\cdot)=\psi(\cdot,t):M\times [0,T)\rightarrow\R^3$ a smooth variation of $M$, where $T>0$. We say that the variation $\psi_t$ evolves by MCF if 
\begin{equation}\label{MCF}
\bigg(\frac{\partial\psi_t}{\partial t}(p,t)\bigg)^\bot=H_t(\psi_t(p))(\eta_t)_{\psi_t(p)},\ \forall p\in M,t\in [0,T),
\end{equation}
where $\eta_t:M_t\rightarrow\R^3$ is a unit normal vector field of $M_t=\psi_t(M)$ and $H_t$ is the mean curvature of the surface $M_t$ computed with respect to $\eta_t$.  A surface $M$ in $\R^3$ is a translating soliton of the MCF if it is a solution of Equation \eqref{MCF} for the particular variation given by Euclidean translations $\psi_t(p)=\psi(p)+tv$, where $v\in\R^3$ is a fixed vector named the \emph{translating vector}. As a matter of fact, $\eta_t=\eta$ and $H_t=H_M$, and thus Equation \eqref{MCF} reduces to
\begin{equation}\label{solitonMCF}
H_M(p)=\langle\eta_p,v\rangle.
\end{equation} 
In \cite{HuSi} the authors proved that translating solitons in $\R^3$ appear in the singularity theory of the MCF as the equation of the limit flow by a proper blow-up procedure near type II singular points. Since $\R^3$ is \emph{isotropic} and no direction at all is in some sense \emph{privileged}, after an Euclidean change of coordinates which leaves the problem invariant we may suppose that $v$ is the vertical vector $e_3$. Equation \eqref{solitonMCF} shows us that translating solitons of the MCF in $\R^3$ can be seen as a \emph{prescribed curvature problem}, only involving the measurement of the angle that makes a unit normal field defined on the surface with a unit Killing vector field in the space. 

Among the most recognized examples of translating solitons in $\R^3$, the ones invariant under the $SO(2)$ action of rotations around a fixed axis parallel to the direction of the flow have special interest in themselves. The complete, rotational translating solitons are classified as follows: there exists a complete, strictly convex translating soliton which is an entire graph over $\R^2$, called the \emph{bowl soliton}; and there exists a 1-parameter family of properly embedded annuli called the \emph{translating catenoids} or \emph{wing-like solitons}, see \cite{AlWu,CSS,MSHS,SpXi} for relevant works regarding asymptotic behavior at infinity as well as characterizations of these examples. Concretely, in \cite{CSS} the authors proved that the bowl soliton and the translating catenoids are \emph{asymptotic at infinity} when expressed as graphs outside a compact set.

In recent years, the space $\h2r$ has been considered as a major framework to extend the classical theory of minimal surfaces and non-vanishing constant mean curvature surfaces in the Euclidean space $\R^3$. Many geometers have focused on this space in the last years, developing a fruitful theory of immersed surfaces in $\h2r$. See \cite{NeRo1,NeRo2} for some remarkable works regarding this space. 

The structure of this paper is the following: in \textbf{Section \ref{examples}}, we will study the first properties of translating solitons in $\h2r$, taking as main motivation the well studied theory of translating solitons of the MCF in $\R^3$. We introduce the two models of the space $\h2r$ that we are going to work with. In Theorem \ref{car3} we characterize translating solitons in $\h2r$ as minimal surfaces in a conformal space, and in a density space. In particular, we show that these solitons are critical points for the weighted area functional, as introduced by Gromov in \cite{Gr}. This point of view of translating solitons as minimal surfaces allows us to prove the tangency principle in Theorem \ref{tangency}, and to solve the Dirichlet problem in Proposition \ref{dirichlet}.

The simplest examples of surfaces to study are those invariant under the 1-parameter action of isometries of $\h2r$. In \textbf{Section \ref{rotacionales}}, the considered uniparametric group of isometries are rotations around a vertical axis, and the examples arising are quite similar to the ones in the translating solitons of the MCF theory. These rotationally symmetric examples were constructed by E. Kocakusakli, M. A. Lawn and M. Ortega, see \cite{KoOr,LaOr} in the semi-Riemannian setting, and in particular in the spaces $\H^n\times\R$. Here we will prove the existence of such examples by using a phase space analysis in same fashion as in \cite{BGM}, where the authors studied immersed surfaces in $\R^3$ whose mean curvature is given as a prescribed function in the sphere $\S^2$ depending on its Gauss map. The main idea is that the ODE satisfied by the coordinates of the \emph{generating curve} of a rotationally symmetric translating soliton, can be expressed as a first order autonomous system. With these tools, in Section \ref{exbowl} we prove the existence of the \emph{bowl soliton}, and in Section \ref{excat} we prove the existence of the \emph{translating catenoids}, also called wing-like examples. Both the bowl soliton and the family of translating catenoids are the analogous to the rotationally symmetric translating solitons of the MCF in $\R^3$.

In \textbf{Section \ref{asintotico}}, motivated by the graphical computations of the rotationally symmetric solitons, we study the behavior at infinity of the  bowl soliton and the translating catenoids. In Lemma \ref{asin} we will obtain the behavior that a rotational, graphical translating soliton has when approaching to infinity. In particular, the  bowl soliton and the ends of each translating catenoid have the same asymptotic behavior when expressed as graphs outside a compact set.

Lastly, in \textbf{Section \ref{uniqueness}} we use the examples defined in Section \ref{examples}, their asymptotic behavior at infinity exposed in Section \ref{asintotico}, and Theorem \ref{tangency} to prove some uniqueness and non-existence theorems. Most of the theorems obtained in this Section, motivated by the thesis manuscript in \cite{Pe}, are proved by comparing with a proper translating soliton of the previously introduced, and then invoking Theorem \ref{tangency} in order to arrive to a contradiction. The main result here is Theorem \ref{unicidadbowl}, where we prove that an immersed translating soliton  with finite topology and one end which is $C^1$-asymptotic to the  bowl soliton has to be a vertical translation of the bowl. This similar characterization of the bowl soliton of the MCF in $\R^3$ was first obtained in \cite{MSHS}.
\vspace{1cm}

{\bf Acknowledgements} The author is grateful to the referee for helpful comments that highly improved the final version of the paper.

\section{Preliminaries on translating solitons in the product space $\h2r$}\label{examples}
Throughout this paper we will use two models of the hyperbolic plane $\H^2$:

\begin{itemize}
\item Let $\mathbb{L}^3$ denote the usual Lorentz-Minkowski flat space with global coordinates $(x_1,x_2,x_3)$ and endowed with the metric $dx_1^2+dx_2^2-dx_3^2$. The hyperbolic plane can be regarded as the hypercuadric defined as
$$
\H^2=\{(x_1,x_2,x_3)\in\mathbb{L}^3;\ x_1^2+x_2^2-x_3^2=-1,\ x_3>0\},
$$
endowed with the restriction of the ambient metric.

\item Consider the disk $\D=\{(x_1,x_2)\in\R^2;\ x_1^2+x_2^2<1\}$ endowed with the metric $ds^2=\lambda^2(dx_1^2+dx_2^2)$, where
\begin{equation}\label{conformalfact}
\lambda=\frac{2}{1-x_1^2-x_2^2}.
\end{equation}
Then, the space $(\D,ds^2)$ is isometric to $\H^2$ and its known as the \emph{Poincarè disk model} of $\H^2$. In this model we have global coordinates $(x_1,x_2,z)$, where $(x_1,x_2)\in\D$ and $z\in\R$, and a global orthonormal frame given by
$$
E_1=\frac{1}{\lambda}\partial_{x_1},\hspace{.5cm} E_2=\frac{1}{\lambda}\partial_{x_2},\hspace{.5cm} E_3=\partial_z.
$$
\end{itemize}

The product space $\h2r$ is defined as the Riemannian product of the hyperbolic plane $\H^2$ and the real line $\R$, endowed with the usual product metric which we will denote by $\langle\cdot,\cdot\rangle$.

In the space $\h2r$ there are defined the two usual projections $\pi_1:\h2r\rightarrow\H^2$ and $\pi_2:\h2r\rightarrow\R$. The \emph{height function} of $\h2r$ is defined to be the second projection $\pi_2$, and is commonly denoted as $h(p):=\pi_2(p)$ for all $p\in\h2r$. The gradient of the height function is a vertical, unit Killing vector field and is commonly denoted in the literature by $\partial_z$.

Let us point out two key properties that translating solitons in $\R^3$ satisfy. One of the main tools in this theory is the fact that they can be regarded as minimal surfaces in the conformal space $\big(\R^3,e^{x_3}g_{euc}\big)$, where $x_3$ stands for the third coordinate of a point and $g_{euc}$ is the usual Euclidean metric. The conformal metric $e^{x_3}g_{euc}$ is known in the literature as the \emph{Ilmanen metric}, see \cite{Il} for more details. Consequently, every translator in $\R^3$ is a minimal surface in $\big(\R^3,e^{x_3}g_{euc}\big)$ and vice versa. 

The theory of translating solitons in the Euclidean space is also related with the one of manifolds with density as follows: Let $(\cN,g,\phi)$ be a manifold with a density function $\phi\in C^\infty(\cN)$. For manifolds with density, Gromov \cite{Gr} defined the \emph{weighted mean curvature} of an oriented hypersurface $M\subset(\cN,g,\phi)$ by
\begin{equation}\label{weightedmc}
H_\phi:=H_M-g(\nabla\phi,\eta),
\end{equation}
where $H_M$ is the mean curvature of $M$ in $(\cN,g)$, $\eta$ is a unit normal vector field along $M$ and $\nabla$ is the gradient computed in the ambient space $(\cN,g)$, see also \cite{BCMR}. 

For the particular case when we consider the weighted space $\big(\R^3,g_{euc},x_3\big)$, from \eqref{weightedmc} we obtain
$$
H_{x_3}=H-g_{euc}(\eta,\nabla x_3)=H-g_{euc}(\eta,e_3).
$$
Thus, an immersed surface in $\R^3$ is a translating soliton if and only if it is a minimal surface in $\R^3$ measured with the density $x_3$.

The next theorem proves that the translating solitons in $\h2r$ inherits these same properties.
\begin{teo}\label{car3}
Let $M$ be an immersed surface in $\h2r$. Then, are equivalent:

\begin{itemize}
\item[1.] The surface $M$ is a translating soliton in $\h2r$.

\item[2.] The surface $M$ is minimal in the conformal space $\big(\h2r,e^{h}\langle\cdot,\cdot\rangle\big)$.

\item[3.] The surface $M$ is weighted minimal in the density space $\big(\h2r,\langle\cdot,\cdot\rangle,h\big)$.
\end{itemize}
\end{teo}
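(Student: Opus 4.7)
My plan is to reduce each of the two equivalences to a single standard identity. For $(1) \Leftrightarrow (3)$ I will apply the definition \eqref{weightedmc} of Gromov's weighted mean curvature with density function equal to the height $h$. For $(1) \Leftrightarrow (2)$ I will invoke the classical transformation law of the mean curvature under a conformal change of the ambient metric, applied to the conformal factor $e^h$. In both computations the crucial fact is that $\nabla h = \partial_z$ in $\h2r$, since $\langle \nabla h, X\rangle = X(h) = \langle \partial_z, X\rangle$ for every ambient vector field $X$; this immediately converts the analytic condition I obtain in each case into the soliton equation \eqref{mediapresc}.

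To prove $(1) \Leftrightarrow (3)$, I simply substitute $(\cN, g, \phi) = (\h2r, \langle\cdot,\cdot\rangle, h)$ into \eqref{weightedmc}. Using $\nabla h = \partial_z$ this gives
$$H_h = H_M - \langle \nabla h, \eta\rangle = H_M - \nu,$$
so the weighted-minimality condition $H_h \equiv 0$ is literally \eqref{mediapresc}, and the equivalence follows.

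For $(1) \Leftrightarrow (2)$, I would write $e^h = e^{2f}$ with $f = h/2$ and use the fact that under the conformal rescaling $\tilde g = e^{2f}\langle\cdot,\cdot\rangle$ the Levi-Civita connection transforms as
$$\tilde\nabla_X Y = \nabla_X Y + X(f)\,Y + Y(f)\,X - \langle X, Y\rangle\,\nabla f.$$
Taking the normal component with respect to the $\tilde g$-unit normal $\tilde\eta = e^{-f}\eta$, the scalar second fundamental form of $M$ picks up the extra term $-\langle X, Y\rangle\,\langle \nabla f, \eta\rangle$; tracing with respect to the induced conformal metric on $M$ then yields
$$\tilde H \;=\; e^{-f}\bigl(H_M - 2\,\langle \nabla f, \eta\rangle\bigr) \;=\; e^{-h/2}\bigl(H_M - \langle \partial_z, \eta\rangle\bigr).$$
Hence $\tilde H \equiv 0$ on $M$ if and only if $M$ satisfies \eqref{mediapresc}, completing the equivalence.

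No deep obstacle is anticipated: the statement is essentially a verification that three known analytic encodings of the same geometric object all coincide, directly paralleling the Euclidean picture (Ilmanen's conformal metric and Gromov's density viewpoint) recalled in the introduction. The only mildly technical point is the conformal mean curvature formula in the second step, but it is entirely standard and is derived from the connection transformation displayed above in a short, routine computation; the only real care is bookkeeping of the conformal factor so that the trace in the induced metric on $M$ produces the coefficient exactly matching the soliton equation.
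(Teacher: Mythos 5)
Your proposal is correct and follows essentially the same route as the paper: both equivalences are reduced to the identity $\nabla h=\partial_z$, with $(1)\Leftrightarrow(3)$ obtained by direct substitution into \eqref{weightedmc} and $(1)\Leftrightarrow(2)$ obtained from the conformal transformation law $\overline{H}_M=e^{-\phi}\bigl(H_M-2\langle\nabla\phi,\eta\rangle\bigr)$ with $\phi=h/2$, exactly as in the paper's proof. The only difference is cosmetic: the paper quotes this conformal formula as well known, while you sketch its derivation from the transformation of the Levi-Civita connection.
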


\begin{proof}
$\underline{1.\Longleftrightarrow 2.}$ A wide-known formula states that given a three dimensional Riemannian manifold $(\cN,g)$ and a conformal metric $\overline{g}=e^{2\phi}g$, where $\phi$ is a smooth function on $\cN$, the mean curvatures $\overline{H}_M$ and $H_M$ of $M$ with respect to the metrics $\overline{g}$ and $g$ respectively, are related by the formula
$$
\overline{H}_M=e^{-\phi}\big(H_M-2g(\nabla\phi,\eta)\big),
$$
where $\nabla$ is the gradient operator with respect to the metric $g$.

In our situation, the conformal factor is just the exponential of the height function $h$. As the gradient of the height function in $\h2r$ is no other than the vertical Killing vector field $\partial_z$, we obtain
$$
\overline{H}_M=e^{-h/2}\big(H_M-\langle\partial_z,\eta)\rangle\big)=e^{-h/2}\big(H_M-\nu\big).
$$
Thus, $M$ is a translating soliton if and only if the mean curvature $\overline{H}_M$ vanishes identically, proving the equivalence between the first items.
\vspace{.5cm}

$\underline{1.\Longleftrightarrow 3.}$ From Equation \eqref{weightedmc} for the density space $\big(\h2r,\langle\cdot,\cdot\rangle,h\big)$, we obtain
$$
H_h=H_M-\langle\partial_z,\eta\rangle=H_M-\nu.
$$
This proves that $M$ is a translating soliton in $(\h2r,\langle\cdot,\cdot\rangle)$ if and only if $M$ is weighted minimal in $\big(\h2r,\langle\cdot,\cdot\rangle,h\big)$.
\end{proof}

The importance of Item $3$ in the previous theorem is that translating solitons can be characterized as critical points of the weighted area functional in the following way: Let $(\cN,g,\phi)$ a density manifold. For a measurable subset $\Omega\subset\cN$ with boundary $M=\partial\Omega$ and inward unit normal $\eta$, we can define the \emph{weighted area} of $M$ as
$$
A_\phi(M)=\int_M e^\phi dv_M,
$$
where $dv_M$ stands for the area element with respect to the metric $g$. Consider a compactly supported variation $\{\Psi_t\}$ of a immersed surface $M$ with $\Psi'(0)=V+\omega\eta$, where $V$ is a tangent vector field along $M$ and $\omega$ is a smooth function with compact support on $M$. By Bayle's variational formula in \cite{Ba}, we have
$$
\frac{d}{dt}\bigg|_{t=0}A_\phi(\Psi_t(M))=\int_M H_\phi\omega e^\phi dv_M,
$$
and thus weighted minimal surfaces in density spaces are critical points of the weighted area functional.

In particular, Item 3 in Theorem \ref{car3} ensures us that \emph{\textbf{translating solitons are critical points for the weighted area functional under compactly supported variations.}}

The minimality of a translating soliton in the conformal space $(\h2r,e^{h}\langle\cdot,\cdot\rangle)$ given by Item $2$ in Theorem \ref{car3} allows us to formulate the \emph{tangency principle}, which resembles us to the case of minimal surfaces in $\R^3$ and is just a consequence of the maximum principle for elliptic PDE's due to Hopf.

\begin{teo}[Tangency principle]\label{tangency} Let $M_1$ and $M_2$ be two connected translating solitons in $\h2r$ with possibly non-empty boundaries $\partial M_1,\ \partial M_2$. Suppose that one of the following statements holds
\begin{itemize}
\item There exists $p\in int(M_1)\cap int (M_2)$ with $(\eta_1)_p=(\eta_2)_p$, where $\eta_i:M_i\rightarrow\S^2$ is the unit normal of $M_i$, respectively. 

\item There exists $p\in\partial M_1\cap\partial M_2$ with $(\eta_1)_p=(\eta_2)_p$ and $(\xi_1)_p=(\xi_2)_p$, where $\xi_i$ is the interior unit conormal of $\partial M_i$.
\end{itemize}

Assume that $M_1$ lies locally around $p$ at one side of $M_2$. Then, in either situation, both surfaces agree in a neighbourhood of $p$. Moreover, if both surfaces $M_i$ are complete, then $M_1=M_2$.
\end{teo}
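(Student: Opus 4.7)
The plan is to reduce the tangency principle for translating solitons to the classical tangency principle for minimal surfaces in a Riemannian three-manifold, using the equivalence provided by Theorem \ref{car3}. Since every translating soliton is a minimal surface in the conformal space $(\h2r,e^{h}\langle\cdot,\cdot\rangle)$, both $M_1$ and $M_2$ can be regarded as minimal immersions in this (real analytic) ambient space sharing the contact point $p$ with the same unit normal. All the subsequent analysis will be carried out with respect to this conformal metric.

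For the interior case, I would introduce normal coordinates for the conformal metric around $p$ in which the common tangent plane $T_pM_1=T_pM_2$ is horizontal and $(\eta_1)_p=(\eta_2)_p$ points vertically. By the implicit function theorem, in a neighbourhood of $p$ each surface $M_i$ is the graph of a smooth function $u_i$ over a small disc in the tangent plane, and the one-sided assumption translates into $u_1\geq u_2$ with equality at the origin. Each $u_i$ satisfies the minimal surface equation in the conformal metric, a quasilinear elliptic PDE $Q[u_i]=0$. Subtracting the two equations and expressing the differences of coefficients via the fundamental theorem of calculus, one obtains that $w:=u_1-u_2\geq 0$ satisfies a linear elliptic equation $Lw=0$ without zeroth-order term, whose coefficients depend on the $u_i$ and their derivatives. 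Hopf's strong maximum principle then forces $w\equiv 0$ in a neighbourhood of the origin, hence $M_1=M_2$ near $p$.

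For the boundary case I would proceed analogously, but choose coordinates in which both boundaries $\partial M_i$ pass through $p$ in the direction orthogonal to the common interior conormal $(\xi_1)_p=(\xi_2)_p$; since $M_i$ has the same tangent plane, the same unit normal, and the same interior conormal at $p$, both graphs $u_i$ are defined on a common half-disc with a shared boundary arc through the origin, and one still has $w\geq 0$ with $w(0)=0$. Applying the boundary point version of Hopf's lemma together with the interior maximum principle yields $w\equiv 0$ in a neighbourhood of the origin. The main obstacle lies precisely here, in setting up the graphs over a common domain so that the PDE comparison is legitimate up to the boundary; the hypothesis on the conormals is tailor-made for this. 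Finally, to pass from local to global coincidence when both $M_i$ are complete, I would invoke the real analyticity of minimal surfaces in the analytic ambient $(\h2r,e^{h}\langle\cdot,\cdot\rangle)$: the set of points of $M_1\cap M_2$ at which the two surfaces coincide locally is open by the previous step and closed by continuity, hence by connectedness $M_1=M_2$.
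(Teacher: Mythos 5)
Your proposal is correct and follows exactly the route the paper intends: the paper gives no detailed proof, merely observing that the tangency principle ``is just a consequence of the maximum principle for elliptic PDE's due to Hopf'' once translating solitons are viewed, via Item 2 of Theorem \ref{car3}, as minimal surfaces in the conformal space $\big(\h2r,e^{h}\langle\cdot,\cdot\rangle\big)$. Your write-up simply supplies the standard details (graphs over the common tangent plane, the linear elliptic equation for the difference, the interior and boundary versions of Hopf's lemma, and the open--closed argument for completeness), so it matches the paper's approach.
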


We also focus our attention on solving the \emph{Dirichlet problem} for graphical translating solitons. The next result is a consequence of Theorem 1.1 in \cite{CHH}, and gives conditions for the existence of graphical translating solitons in $\h2r$.

\begin{pro}\label{dirichlet}
Let $\Omega\subset\H^2$ be a bounded $C^2$ domain with $C^{2,\alpha}$ boundary, and consider $\varphi\in C^{2,\alpha}(\partial\Omega)$ for $\alpha\in (0,1)$. Suppose that $H_{\partial\Omega}\geq 2$, where $H_{\partial\Omega}$ stands for the inward curvature of $\partial\Omega$. Then, the Dirichlet problem
\begin{equation}\label{divergencia}
\left\{\begin{array}{ll}
2H_M=\displaystyle{\frac{2}{\sqrt{1+|\nabla u|^2}}}=\div\Bigg(\frac{\nabla u}{\sqrt{1+|\nabla u|^2}}\Bigg) & in\ \Omega\\
u=\varphi & on\ \partial\Omega
\end{array}\right.
\end{equation}
has a unique solution $u\in C^{2,\alpha}(\overline{\Omega})$.
\end{pro}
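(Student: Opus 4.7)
The plan is to reduce the statement to a direct application of Theorem 1.1 in \cite{CHH}, supplemented by a uniqueness argument via the tangency principle. First I would rewrite the PDE in convenient geometric form. For a graph $u:\overline{\Omega}\to\R$, the downward unit normal is $\eta=(\partial_z-\nabla u)/\sqrt{1+|\nabla u|^2}$, where $\nabla u$ is the gradient in $\H^2$, so the angle function satisfies $\nu=1/\sqrt{1+|\nabla u|^2}$. The translator equation $H_M=\nu$ of \eqref{mediapresc} then becomes precisely \eqref{divergencia}, a quasilinear uniformly elliptic PDE of prescribed mean curvature type whose right-hand side is bounded pointwise by $1$.

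The heart of the matter is verifying the geometric boundary hypothesis required by \cite{CHH}. Here I would use the observation that a vertical cylinder $\gamma\times\R$ over a curve $\gamma\subset\H^2$ of geodesic curvature $\kappa$ has mean curvature $\kappa/2$ in $\h2r$, since only the tangent direction along $\gamma$ contributes to the trace of the second fundamental form (the $\partial_z$ direction contributes nothing because $\partial_z$ is parallel). Hence the hypothesis $H_{\partial\Omega}\geq 2$ makes $\partial\Omega\times\R$ into a surface of mean curvature $\geq 1$, which dominates the pointwise bound $|\nu|\leq 1$ on the right-hand side of \eqref{divergencia}. This is exactly the barrier condition invoked in \cite{CHH}, and explicit upper and lower barriers at each boundary point can be assembled from suitable vertical translates of graph pieces of $\partial\Omega\times\R$ adjusted to the prescribed boundary value $\varphi$.

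Once existence follows from \cite{CHH}, uniqueness is immediate from Theorem \ref{tangency}. Given two solutions $u_1,u_2$ with the same boundary data $\varphi$, I would vertically translate the graph of $u_1$ upward and slide it back down until it first touches the graph of $u_2$ from above. The contact point is either interior, yielding a tangency with matching downward normals (forbidden by the first bullet of Theorem \ref{tangency}), or on the boundary, where the coincidence $u_1=u_2=\varphi$ on $\partial\Omega$ together with $C^1$ regularity up to the boundary forces matching normals and conormals (forbidden by the second bullet). Either way $u_1\equiv u_2$. As an alternative, the difference $u_1-u_2$ satisfies a linear elliptic equation with zero boundary data, and the classical Hopf maximum principle gives the same conclusion.

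The main technical obstacle is really sealed inside \cite{CHH}: producing uniform $C^{2,\alpha}$ a priori estimates along a continuity path in order to run Leray--Schauder, for which the boundary gradient bounds furnished by the cylinder barriers above are the indispensable geometric input; interior gradient and higher order estimates then follow from Korevaar-type arguments for the divergence-form equation combined with Krylov--Safonov and Schauder theory. Since \cite{CHH} packages all of this into a single statement, my contribution reduces to verifying the geometric barrier condition, which is what the hypothesis $H_{\partial\Omega}\geq 2$ encodes.
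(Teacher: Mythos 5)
Your proposal takes essentially the same route as the paper: both reduce the statement to Theorem 1.1 of \cite{CHH}, with the hypothesis $H_{\partial\Omega}\geq 2$ playing the role of the boundary condition $H_{\partial\Omega}\geq F$ for $F=\sup|\overline{\nabla}f|=2$, where $f=2h$; your barrier reading of that condition (the cylinder $\partial\Omega\times\R$ has mean curvature $\kappa/2\geq 1$, dominating $|H_M|=|\nu|\leq 1$) is just the geometric content of the same hypothesis. The one item you do not verify is the Ricci lower bound $\mathrm{Ric}_\Omega\geq -F$, which is also a hypothesis of the cited theorem and which the paper checks explicitly; it holds trivially for $\Omega\subset\H^2$, so this is an omission rather than a gap. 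Your explicit uniqueness argument (tangency principle, or the maximum principle applied to $u_1-u_2$) is a genuine addition, since the paper's proof only establishes existence and leaves uniqueness to the citation.
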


\begin{proof}
We will check that our hypothesis agree with the hypothesis in Theorem 1.1 in \cite{CHH}, which is formulated in a more general setting; there, $\Omega$ is an open subset of a complete, non-compact manifold $M$, and Equation \eqref{divergencia} has the expression
$$
\div\Bigg(\frac{\nabla u}{\sqrt{1+|\nabla u|^2}}\Bigg)=\langle\overline{\nabla} f,\eta\rangle,
$$
where $\eta$ is the unit normal of the graph, $f$ is a smooth function defined in the product manifold $M\times\R$, and $\overline{\nabla}$ is the gradient operator computed with respect the product metric. In our setting, the function $f$ defined on $\h2r$ is just $f(p)=2h(p)$, where $h$ denotes as usual the height function of a point $p\in\h2r$, and thus $\overline{\nabla} f=2\partial_z$. In the same spirit as in Theorem 1.1, we define $F=\underset{\overline{\Omega\times\R}}{\sup}|\overline{\nabla} f|$. In our particular case, $F=2$. Now, for applying Theorem 1.1, three conditions must hold:
\begin{itemize}
\item[1.] $F<\infty$, which is trivial in our case.
\item[2.] $Ric_\Omega\geq -F$. In our case, $\Omega\subset\H^2$, which has constant curvature equal to $-1$. The Ricci curvature of the hyperbolic plane is equal to $-2$, and thus this result also holds trivially.
\item[3.] $H_{\partial\Omega}\geq F$. This is just the hypothesis stated at the formulation of Proposition \ref{dirichlet}.
\end{itemize}
In this situation, Theorem 1.1 in \cite{CHH} ensures that there exists a function $u\in C^{2,\alpha}(\overline{\Omega})$, such that the graph defined by $u$ solves Equation \eqref{divergencia}. This completes the proof of Proposition \ref{dirichlet}.
\end{proof}

To end this section, we will give the first examples of solutions of Equation \eqref{hyperbolicsoliton}, which are minimal surfaces of $\h2r$. If a translating soliton is also a minimal surface, then the translating vector $\partial_z$ that defines the movement of the translating soliton must satisfy
\begin{equation}\label{hypvertical}
\partial_z^\bot=H_M(p)\eta_p=0,
\end{equation}
that is, $\partial_z$ has to be tangential to the translator $M$ at each $p\in M$. This happens for vertical planes $\gamma\times\R$, where $\gamma\subset\H^2$ is a geodesic, which are minimal surfaces of $\h2r$ everywhere tangential to $\partial_z$, and thus translating solitons. 

\section{Rotationally symmetric translating solitons}\label{rotacionales}

This section is devoted to the study of translating solitons which are invariant under the isometric $SO(2)$-action of rotations around a vertical axis. These examples were already obtained in \cite{KoOr,LaOr} for translating solitons immersed in semi-Riemannian manifolds. An alternative proof will be given in this paper, and the existence, uniqueness and properties of these rotationally symmetric translators will be analysed by means of a phase space study, inspired by the ideas developed in Section 3 in \cite{BGM}.

\textbf{Throughout this section, the model used for the space $\H^2$ will be the Lorentz-Minkowski hyperboloid in $\mathbb{L}^3$.}

After an ambient translation, we may suppose that the vertical axis is the one passing through the origin. Let $\alpha(t)=(\sinh r(t),0,\cosh r(t),w(t)),\ t\in I\subset \R$ be an arc-length parametrized curve in the space $\h2r$. In this situation, we can make $\alpha$ rotate around the vertical axis passing through the origin  under the isometric $SO(2)$-action of a circle $\phi(s)=(\cos s,\sin s)$. Bearing this in mind, the parametrization given by
\begin{equation}\label{parrot}
\psi(t,\theta)=(\sinh r(t)\cos \theta,\sinh r(t)\sin \theta,\cosh r(t),w(t))
\end{equation}
generates an immersed surface $M$, rotationally symmetric with respect to the vertical axis passing through the origin. With this parametrization the angle function is given, up to a change of the orientation, by $\nu(\psi(t,\theta))=r'(t)$. The principal curvatures of $M$ at each $\psi(t,s)$ are given by
\begin{equation}\label{curvaturasprinc}
\kappa_1=\kappa_\alpha=r'(t)w''(t)-r''(t)w'(t),\ \ \ \ \ \ \ \kappa_2=w'(t)\coth r(t),
\end{equation}
where $\kappa_\alpha$ is the geodesic curvature of $\alpha(t)$. The mean curvature of a rotationally symmetric surface in $\h2r$ has the expression
$$
2H_M=r'(t)w''(t)-r''(t)w'(t)+w'(t)\coth r(t).
$$
By hypothesis, $M$ is a translating soliton and thus $H_M(\psi(t,\theta))=r'(t)$. This implies that the coordinates $r(t),w(t)$ of an arc-length parametrized curve, generating a rotationally symmetric translating soliton given by Equation \eqref{parrot}, satisfy the system
\begin{equation}\label{ODErot}
 \left\{\begin{array}{rll}
 r'(t)&=&  \cos\theta(t)\\
 w'(t)&=& \sin\theta(t)\\
 \theta'(t)&=& 2\cos\theta(t)-\sin\theta(t)\coth r(t).
\end{array}
\right.
\end{equation}
From now on, we will suppress the dependence of the variable $t$ and just write $r\equiv r(t)$, and so on. The arc-length parametrized condition $r'^2+w'^2=1$ implies that the function $r$ is a solution of the autonomous second order ODE
\begin{equation}\label{ODErot2}
r''=(1-r'^2)\coth r-2\varepsilon r'\sqrt{1-r'^2},\ \ \ \ \varepsilon=\mathrm{sign}(w'),
\end{equation}
on every subinterval $J\subset I$ where $w'\neq 0$.

The change $r'=y$ transforms \eqref{ODErot2} into the first order autonomous system
\begin{equation}\label{1ordersys}
\left(\begin{array}{c}
r'\\
y'
\end{array}\right)=\left(\begin{array}{c}
y\\
(1-y^2)\coth r -2\varepsilon y\sqrt{1-y^2}
\end{array}\right)=F(r,y).
\end{equation}

We define the \emph{phase space} of \eqref{1ordersys} as the half-strip $\Theta_\varepsilon:=(0,\8)\times (-1,1)$, with coordinates $(r,y)$ denoting, respectively, the distance to the rotation axis and the angle function. It will be also useful to define the sets $\Theta_\varepsilon^+:=(\Theta_\varepsilon\cap\{y>0\})$ and $\Theta_\varepsilon^-:=(\Theta_\varepsilon\cap\{y<0\})$. The \emph{equilibrium points}, if they exist, correspond to points at constant distance to the axis of rotation. They can be characterized by the fact that $F(r_0,y_0)=0$. In this case no equilibrium points exist, and thus there are no translating solitons that can be considered as rotational vertical cylinders. 

A straightforward consequence of the uniqueness of the Cauchy problem is that the orbits $\gamma(t):=(r(t),y(t))$ are a foliation by \emph{\textbf{regular proper $C^2$ curves}} of $\Theta_\varepsilon$. This properness condition will be applied throughout this paper, and should be interpreted as follows: any orbit $\gamma(t)$ cannot have as endpoint a finite point of the form $(x_0,y_0)$ with $x_0\neq 0$ and $y_0\neq\pm 1$, since at these points Equation \eqref{1ordersys} has local existence and uniqueness, and thus any orbit around a point $(x_0,y_0)$ can be extended.

This properness condition implies that any orbit $\gamma(t)$ is a maximal curve inside $\Theta_\varepsilon$ which has its endpoints at the boundary $\overline{\Theta}_\varepsilon=\{0\}\times\{1,-1\}$.

The points in $\Theta_\varepsilon$ where $y'=0$ are those lying at the horizontal graph
\begin{equation}
r=\Gamma_\varepsilon(y)=\arctanh\bigg(\frac{\sqrt{1-y^2}}{2\varepsilon y}\bigg).
\end{equation}

We will denote by $\Gamma_\varepsilon$ the intersection $\Theta_\varepsilon\cap\Gamma_\varepsilon(y)$. It is immediate to observe that the values $t\in J$ where the profile curve $\alpha$ has vanishing geodesic curvature are those where $y'=0$, i.e. the points where $(r(t),y(t))\in\Gamma_\varepsilon$.

Notice that, as the function $\arctanh$ is defined only for values lying in the interval $(-1,1)$, $\Gamma_\varepsilon(y)$ is only defined for values $y$ satisfying the bound
$$
-1<\frac{\sqrt{1-y^2}}{2y}<1\Longleftrightarrow |y|>\frac{1}{\sqrt{5}}.
$$
That is, the curve $\Gamma_\varepsilon$ has an asymptote at the lines $y=\pm 1/\sqrt{5}$. As $\Gamma_\varepsilon$ only appears at $\Theta_\varepsilon$ when $\varepsilon y\geq 0$, then for $\varepsilon=1$ (resp. $\varepsilon=-1$) $\Gamma_1$ only appears at $\Theta_1$ for $y\in(1/\sqrt{5},1]$ (resp. only appears at $\Theta_{-1}$ for $y\in [-1, -1/\sqrt{5})$). This implies that $\Gamma_\varepsilon$ and the axis $y=0$ divide $\Theta_\varepsilon$ into three connected components where both $r'$ and $y'$ are monotonous and $\alpha$ has non-vanishing geodesic curvature. It will be useful for the sake of clarity to name each of these \emph{monotonicity regions}: we define $\Theta_\varepsilon^+=\Theta_\varepsilon\cap\{y>0\}$ and $\Theta_\varepsilon^-=\Theta_\varepsilon\cap\{y<0\}$. When $\varepsilon=1$, then $\Gamma_1$ is contained entirely in $\Theta_1^+$. We define
\begin{equation}\label{regionesmonotoniamas}
\begin{array}{l}
\Lambda_1^-=\big(\Theta_1^+\cap\{y\leq 1/\sqrt{5}\}\big)\cup\{(r,y);\ y>1/\sqrt{5},\ r<\Gamma_1(y)\}, \vspace{.3cm}\\

\Lambda_1^+=\{(r,y);\ y>1/\sqrt{5},\ r>\Gamma_1(y)\}.
\end{array}
\end{equation}
which are, along with $\Theta_1^-$ the three monotonicity regions in $\Theta_1$, see Fig. \ref{fases}, left. Likewise, if $\varepsilon=-1$ then $\Gamma_{-1}$ is contained in $\Theta^-_{-1}$. Now we define 
\begin{equation}\label{regionesmonotoniamenos}
\begin{array}{l}
\Lambda_{-1}^-=\big(\Theta_{-1}^-\cap\{y\geq 1/\sqrt{5}\}\big)\cup\{(r,y);\ y<-1/\sqrt{5},\ r<\Gamma_1(y)\}, \vspace{.3cm}\\

\Lambda_{-1}^+=\{(r,y);\ y<-1/\sqrt{5},\ r>\Gamma_1(y)\}.
\end{array}
\end{equation}
In this situation the three monotonicity regions of $\Theta_{-1}$ are $\Theta_{-1}^+,\ \Lambda_{-1}^-$ and $\Lambda_{-1}^+$, see Fig. \ref{fases}, right.
\begin{figure}[H]
\hspace{-1.3cm}
\includegraphics[width=0.55\textwidth]{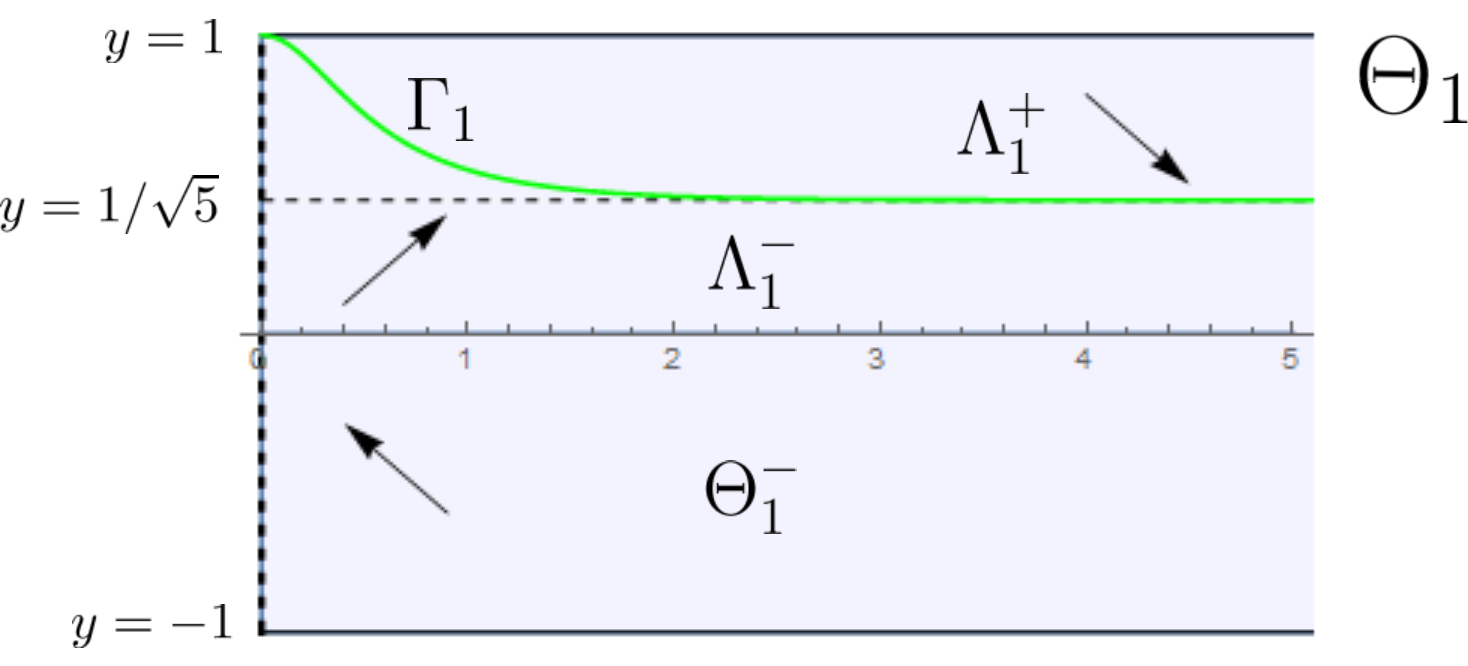} \includegraphics[width=0.6\textwidth]{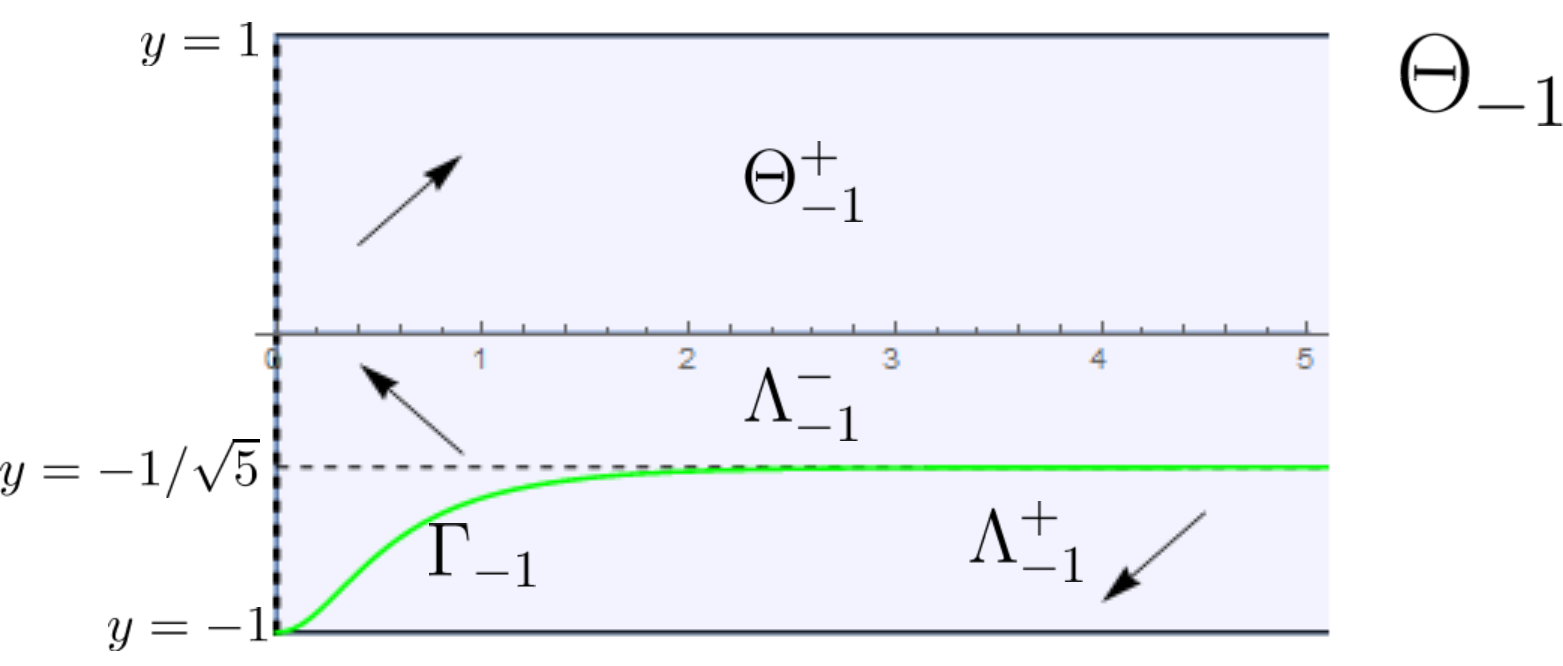}
\caption{Left: phase space $\Theta_1$. Right: phase space $\Theta_{-1}$. The monotonicity regions described in Equation \eqref{regionesmonotoniamas} and \eqref{regionesmonotoniamenos} are shown in each phase space.}
\label{fases}
\end{figure}
We should emphasize that the signs of the principal curvatures given by Equation \eqref{curvaturasprinc} at each point $\alpha(t)$ are given by
\begin{equation}\label{signk}
\mathrm{sign}(\kappa_1)=\mathrm{sign}(-\varepsilon y'(t)),\\\\\ \mathrm{sign}(\kappa_2)=\mathrm{sign}(\varepsilon).
\end{equation}
In each of these monotonicity regions we can view the orbits as functions $y=y(r)$ wherever possible, i.e. at points with $y\neq 0$, and thus we have
\begin{equation}\label{yfuncx}
y\frac{dy}{dr}=(1-y^2)\coth r-2\varepsilon y\sqrt{1-y^2}.
\end{equation}
In particular, in each monotonicity region the sign of $yy'$ is constant. As a consequence, the signs of $y_0$ and $r_0-\Gamma_\varepsilon(r_0)$ (for $y_0\neq 0$) determine the behavior of the orbit of \eqref{1ordersys} seen as a function $(r,y(r))$ in each component. The possible behaviors are summarized in the following Lemma:

\begin{lem}\label{comportamiento}
In the above setting, for any $(r_0,y_0)\in\Theta_\varepsilon$ with $y_0\neq 0$, the following properties hold:

\begin{itemize}
\item If $r_0>\Gamma_\varepsilon(y_0)$ (resp. $r_0<\Gamma_\varepsilon(y_0)$) and $y_0>0$, then $y(r)$ is strictly decreasing (resp. increasing), at $r_0$.

\item If $r_0>\Gamma_\varepsilon(y_0)$ (resp. $r_0<\Gamma_\varepsilon(y_0)$) and $y_0<0$, then $y(r)$ is strictly increasing (resp. decreasing), at $r_0$.

\item If $y_0=0$, then the orbit passing through $(r_0,0)$ is orthogonal to the $r$-axis.

\item If $r_0=\Gamma_\varepsilon(y_0)$, then $y'(r_0)=0$ and $y(r)$ has a local extremum at $r_0$.
\end{itemize}
\end{lem}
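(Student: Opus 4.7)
The plan is to read off all four statements directly from the explicit formula for $dy/dr$ on orbits with $y\neq 0$. Starting from Equation \eqref{yfuncx}, I factor
\[
y\,\frac{dy}{dr}=\sqrt{1-y^{2}}\,\bigl(\sqrt{1-y^{2}}\coth r-2\varepsilon y\bigr),
\]
so at any $(r_{0},y_{0})\in\Theta_{\varepsilon}$ with $y_{0}\neq 0$ the sign of $dy/dr$ is $\mathrm{sign}(y_{0})\cdot\mathrm{sign}\bigl(\sqrt{1-y_{0}^{2}}\coth r_{0}-2\varepsilon y_{0}\bigr)$, since $\sqrt{1-y_{0}^{2}}>0$ inside $\Theta_{\varepsilon}$. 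The defining relation $\tanh(\Gamma_{\varepsilon}(y))=\sqrt{1-y^{2}}/(2\varepsilon y)$ is equivalent to $\coth(\Gamma_{\varepsilon}(y))=2\varepsilon y/\sqrt{1-y^{2}}$, so the bracket vanishes exactly on the curve $\Gamma_{\varepsilon}$; and the strict decrease of $\coth$ on $(0,\infty)$ implies that $r_{0}>\Gamma_{\varepsilon}(y_{0})$ is equivalent to the bracket being negative, while $r_{0}<\Gamma_{\varepsilon}(y_{0})$ is equivalent to the bracket being positive.

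Combining these two signs in the four possible combinations of $\mathrm{sign}(y_{0})$ and $\mathrm{sign}(r_{0}-\Gamma_{\varepsilon}(y_{0}))$ yields the monotonicity claims of items 1 and 2 at once. For item 3 I step back to the system \eqref{1ordersys}: at a point with $y_{0}=0$ one has $r'(t_{0})=y_{0}=0$ and $y'(t_{0})=\coth r_{0}\neq 0$, so the tangent to the orbit at $(r_{0},0)$ is a nonzero vertical vector in the $(r,y)$-plane, and the orbit crosses the $r$-axis orthogonally. Item 4 follows because the defining equation of $\Gamma_{\varepsilon}$ forces $y'(t_{0})=0$ while $r'(t_{0})=y_{0}\neq 0$, so $dy/dr(r_{0})=0$; that this is a local extremum is an immediate consequence of the sign change of the bracket across $r_{0}$ already established in items 1 and 2.

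There is no real obstacle to the argument: everything reduces to the factorization above together with the strict monotonicity of $\coth$ on $(0,\infty)$. The only delicate issue is that dividing by $y_{0}$ to pass from the sign of $y\,dy/dr$ to the sign of $dy/dr$ requires $y_{0}\neq 0$, which is precisely why the case $y_{0}=0$ must be treated separately at the level of the autonomous system \eqref{1ordersys} rather than at the level of the graphical equation \eqref{yfuncx}.
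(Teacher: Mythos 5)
Your argument is correct and is essentially the paper's own reasoning: the lemma is stated there without a separate proof, as a summary of the sign analysis of Equation \eqref{yfuncx} relative to the curve $\Gamma_\varepsilon$ carried out in the preceding paragraphs. Your factorization of the right-hand side together with the strict monotonicity of $\coth$ on $(0,\infty)$ just makes that sign analysis explicit, and your separate treatment of $y_0=0$ at the level of the autonomous system \eqref{1ordersys} matches the paper's remark that orbits can be viewed as graphs $y(r)$ only where $y\neq 0$.
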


For any $(x_0,y_0)\in\Theta_\varepsilon^1$ we ensure the existence and uniqueness of the Cauchy problem of an orbit passing through $(x_0,y_0)$ that is a solution of system \eqref{1ordersys}. However, Equation \eqref{1ordersys} has a singularity at the points with $x_0=0$, and thus we cannot apply the existence and uniqueness of the Cauchy problem in order to guarantee the existence of an orbit having as endpoints either $(0,\pm 1)$. 

To overcome this difficulty we may solve the Dirichlet problem by Proposition \ref{dirichlet} in order to ensure the existence of a translating soliton in $\h2r$ which is rotational around the vertical axis passing through the origin and that meets this axis orthogonally at some point.

\begin{lem}\label{existeeje}
There exists a disk $\Omega\subset\H^2$ centered at the origin of $\H^2$ and a function $u:\Omega\rightarrow\R$ such that the surface defined by $M=\mathrm{graph}(u)$ is a translating soliton in $\h2r$ which is rotationally symmetric with respect to the vertical axis passing through the origin and that meets this axis in an orthogonal way at some $p\in M$.

Moreover, $M$ is unique among all the graphical translating solitons over $\Omega$ with constant Dirichlet data.
\end{lem}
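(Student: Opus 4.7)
The plan is to derive both parts of the lemma from Proposition \ref{dirichlet}, combined with a standard symmetrization-via-uniqueness argument. The first step is to exhibit a disk $\Omega\subset\H^2$ meeting the curvature hypothesis of that proposition. Taking $\Omega$ to be the geodesic disk centered at the origin of $\H^2$ of radius $\rho=\arctanh(1/2)$, its boundary $\partial\Omega$ is a geodesic circle with constant inward geodesic curvature $\coth(\rho)=2$. Hence $\Omega$ is a bounded $C^\infty$ domain with $C^\infty$ boundary satisfying $H_{\partial\Omega}\geq 2$, so any $C^{2,\alpha}$ regularity required by Proposition \ref{dirichlet} is automatic.

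Next, fix $c\in\R$ and apply Proposition \ref{dirichlet} to the constant Dirichlet datum $\varphi\equiv c$. This yields a unique $u\in C^{2,\alpha}(\overline{\Omega})$ such that $M=\mathrm{graph}(u)$ is a translating soliton in $\h2r$. This already delivers the uniqueness clause of the lemma, since any graphical translator over $\Omega$ with constant boundary data $c$ is a solution of the same Dirichlet problem and therefore coincides with $u$.

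To establish rotational symmetry, observe that the $SO(2)$-action by rotations fixing the vertical axis through the origin preserves the hyperbolic metric on $\H^2$ and the Killing field $\partial_z$, hence preserves the translator equation \eqref{hyperbolicsoliton}; it also leaves $\Omega$ and the constant datum $\varphi\equiv c$ invariant. Therefore, for every rotation $R\in SO(2)$, the function $u\circ R$ solves the same Dirichlet problem, and the uniqueness part of Proposition \ref{dirichlet} forces $u\circ R=u$. Thus $u$ is rotationally symmetric about the origin of $\H^2$, so $M$ is rotationally symmetric about the vertical axis through it.

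Finally, since $u\in C^{2,\alpha}(\overline{\Omega})$ is rotationally symmetric about the origin, its gradient must vanish there; equivalently, the tangent plane to $M$ at the point $p=(0,c)$ is horizontal, so the unit normal $\eta_p$ is vertical and $M$ meets the vertical axis orthogonally at $p$. I do not foresee any genuine obstacle in this argument; the only quantitative input is the standard formula $\coth(\rho)$ for the geodesic curvature of hyperbolic circles, and the rest is a direct application of Proposition \ref{dirichlet} together with the symmetrization trick enabled by its uniqueness clause.
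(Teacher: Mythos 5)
Your proposal is correct and follows essentially the same route as the paper: solve the Dirichlet problem of Proposition \ref{dirichlet} on a disk centered at the origin with constant boundary data, deduce rotational symmetry of the solution from the rotational invariance of the problem together with uniqueness, and conclude that the graph meets the axis orthogonally. You are in fact somewhat more explicit than the paper, which only says ``a small enough disk'' where you compute the geodesic curvature $\coth\rho$ to pin down an admissible radius, and which states the symmetrization step more loosely.
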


\begin{proof}
We will expose the argument for \emph{upwards-oriented} graphs, since it is similar to \emph{downwards-oriented} graphs.

By Proposition \ref{dirichlet}, we can solve the Dirichlet problem in Equation \eqref{divergencia} for upwards-oriented graphs in a small enough disk $\Omega\subset\H^2$ centred at the origin with constant Dirichlet data on the boundary, obtaining a $C^{2,\alpha}$ function $u:\Omega\rightarrow\R$ that solves Equation \eqref{divergencia}.

Let us define $M:=\mathrm{graph}(u)$. As the mean curvature $H_M$ is given by the angle function, and it is rotationally symmetric, the translating soliton $M$ has the same symmetries as the prescribed function and thus $M$ is a rotational surface. The uniqueness of $M$ comes from the maximum principle, as the divergence equation is invariant up to additive constants.
\end{proof}

\subsection{The bowl soliton}\label{exbowl}

The following theorem proves the existence of the analogous to the bowl soliton in $\R^3$.
\begin{teo}\label{teobowl}
There exists an upwards-oriented, rotational translating soliton in $\h2r$ that is an entire, vertical graph.
\end{teo}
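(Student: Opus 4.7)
The plan is to take the local rotational translating soliton furnished by Lemma \ref{existeeje}, parametrize its profile by arc length, and then use the phase-space analysis of this section (for $\varepsilon = 1$) to show that this profile extends to an entire radial parameter. Concretely, I would choose an arc-length parametrization $\alpha(t) = (\sinh r(t), 0, \cosh r(t), w(t))$ of the generating curve of the upward graph with $r(0) = 0$ and $r'(0) = 1$, which produces the initial condition $\gamma(0) = (0,1)$ for the orbit $\gamma(t) = (r(t), y(t))$ of the system \eqref{1ordersys} in $\Theta_1$. Because Lemma \ref{existeeje} already provides a genuine $C^{2,\alpha}$ solution on a neighbourhood of the axis, the orbit $\gamma$ is well-defined past the singularity of the phase system at $(0,1)$, and one has $w'(t) > 0$ for small $t > 0$, so that the choice $\varepsilon = 1$ is consistent.

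The next step is to show that $\gamma$ immediately enters the monotonicity region $\Lambda_1^+$ and remains trapped there. A Taylor expansion of \eqref{ODErot2} at $t = 0$ yields $r(t) = t + O(t^3)$ and $y(t) = 1 - t^2/2 + O(t^4)$; since $\Gamma_1(y) = \arctanh\!\bigl(\sqrt{1-y^2}/(2y)\bigr) \approx t/2$ for small $t$, one has $r > \Gamma_1(y)$ immediately after $t = 0$, placing $\gamma$ in $\Lambda_1^+$ as defined in \eqref{regionesmonotoniamas}. By Lemma \ref{comportamiento}, inside $\Lambda_1^+$ the orbit satisfies $r' > 0$ and $y(r)$ is strictly decreasing. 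The boundary of $\Lambda_1^+$ inside $\Theta_1$ consists of the arc $\Gamma_1$ and the horizontal line $y = 1/\sqrt{5}$: a crossing of $\Gamma_1$ would force $y'(r) = 0$, contradicting strict monotonicity, and a crossing of $y = 1/\sqrt{5}$ at finite $r > 0$ is excluded by the direct computation
\[
y'(t) = \tfrac{4}{5}\bigl(\coth r - 1\bigr) > 0 \quad \text{on } y = 1/\sqrt{5},\ r > 0,
\]
so the phase velocity points strictly upward across this line at every finite radius.

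Once $\gamma$ is trapped in $\Lambda_1^+$, we have $r'(t) = y(t) \geq 1/\sqrt{5}$ uniformly, hence $\gamma$ is defined on all of $[0,\infty)$ with $r(t) \to \infty$. Rotating the resulting profile curve about the vertical axis through the origin then yields a complete rotational translating soliton whose horizontal projection is all of $\H^2$; since $y > 0$ throughout, the angle function $\nu$ never vanishes, and the surface is an entire vertical graph, upward oriented. The main technical obstacle I anticipate is precisely the singularity of the system \eqref{1ordersys} at the axis point $(0,1)$: the phase-space argument by itself does not produce an orbit emanating from this point, which is why Lemma \ref{existeeje} (solving the Dirichlet problem) must be invoked first to supply a genuine $C^2$ initial piece; after that, the remainder of the proof reduces to a qualitative invariance argument in the interior of $\Theta_1$ based on the monotonicity regions already identified.
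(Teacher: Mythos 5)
Your proposal is correct and follows essentially the same route as the paper: invoke Lemma \ref{existeeje} to get past the singularity of \eqref{1ordersys} at the axis point $(0,1)$, show the orbit immediately enters the monotonicity region $\Lambda_1^+$ of $\Theta_1$ and is trapped there, and conclude from $y\geq 1/\sqrt{5}>0$ that the profile extends to all $r$ and the surface is an entire upward graph. Your explicit barrier computation $y'=\tfrac{4}{5}(\coth r-1)>0$ on $y=1/\sqrt{5}$ and the Taylor expansion at the axis are nice elaborations of steps the paper leaves to ``monotonicity and properness,'' but they do not change the argument.
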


\begin{proof}
According to Lemma \ref{existeeje}, we ensure the existence of an upwards-oriented, rotational translating soliton $M$, generated by rotating an arc-length parametrized curve $\alpha(t)$ which is solution of \eqref{ODErot}. As $M$ is upwards-oriented, then at $p_0=M\cap l$, where $l$ is the vertical line passing through the origin, we have $H_M(p_0)=\nu(p_0)=1>0$. By the mean curvature comparison principle, the height function $w(t)$ of $\alpha(t)$ satisfies $w'(t)>0$, for $t>0$ close enough to zero, and thus the orbit $\gamma(t):=(r(t),y(t))$ starts at the point $(0,1)$ in $\Theta_1$ for $t>0$ small enough. Moreover, for $t>0$ small enough the curve $\alpha(t)$ has positive geodesic curvature, and thus the orbit $\gamma$ lies in $\Lambda^+$ for points near to $(0,1)$ in $\Theta_1$. The monotonicity properties imply that the whole $\gamma$ is contained in $\Lambda^+$ and $\kappa_{\alpha(t)}>0$ for all $t>0$. By monotonicity and properness, we can see $\gamma$ as a graph $y=f(r)$, for a certain $f\in C^2([0,\infty))$ satisfying $f(0)=1,\ f'(0)=0,\ f(r)>1/\sqrt{5}$ and $f'(r)<0$ for all $r>0$, see Fig. \ref{bowlperfil}. This implies that the translating soliton $M$ generated by rotating $\alpha$ with respect to the axis $l$, is an entire, vertical graph in $\h2r$, concluding the proof. 

\begin{figure}[H]
\centering
\includegraphics[width=0.6\textwidth]{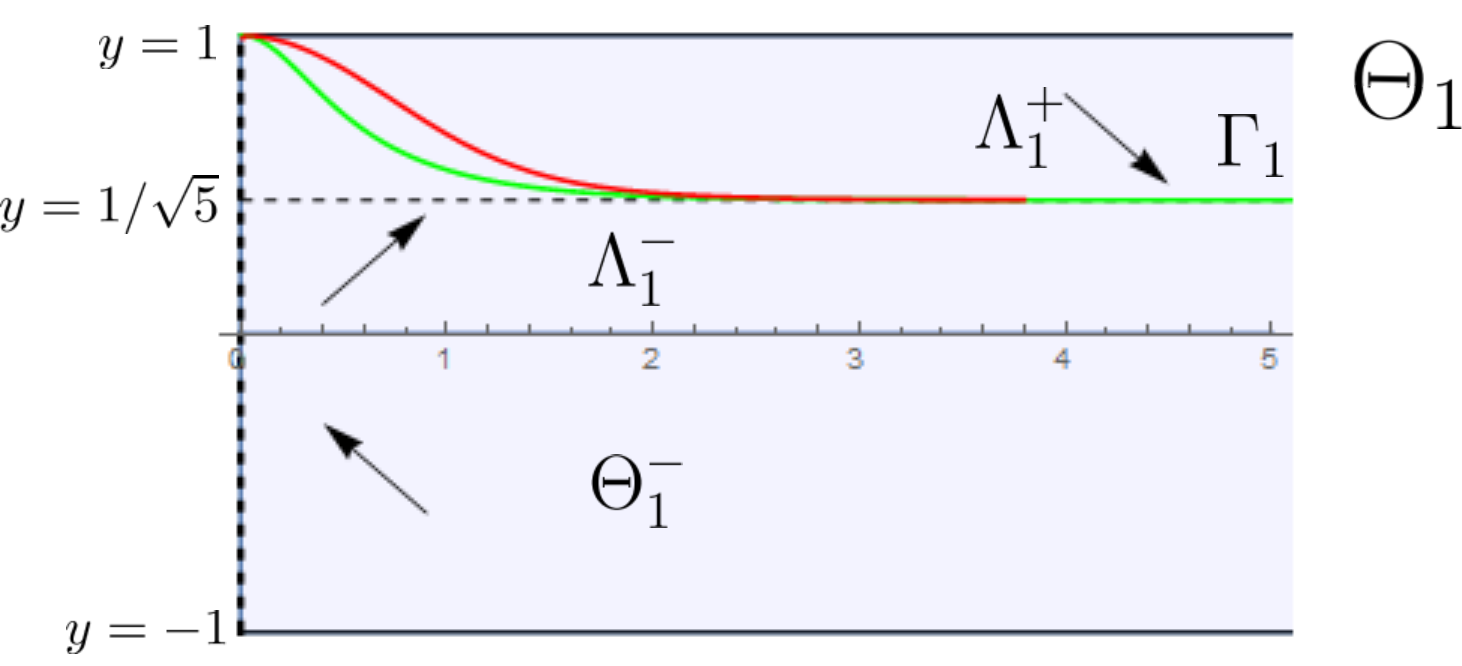}\ \ \ \ \ \ \ \ \ \ \ \includegraphics[width=0.17\textwidth]{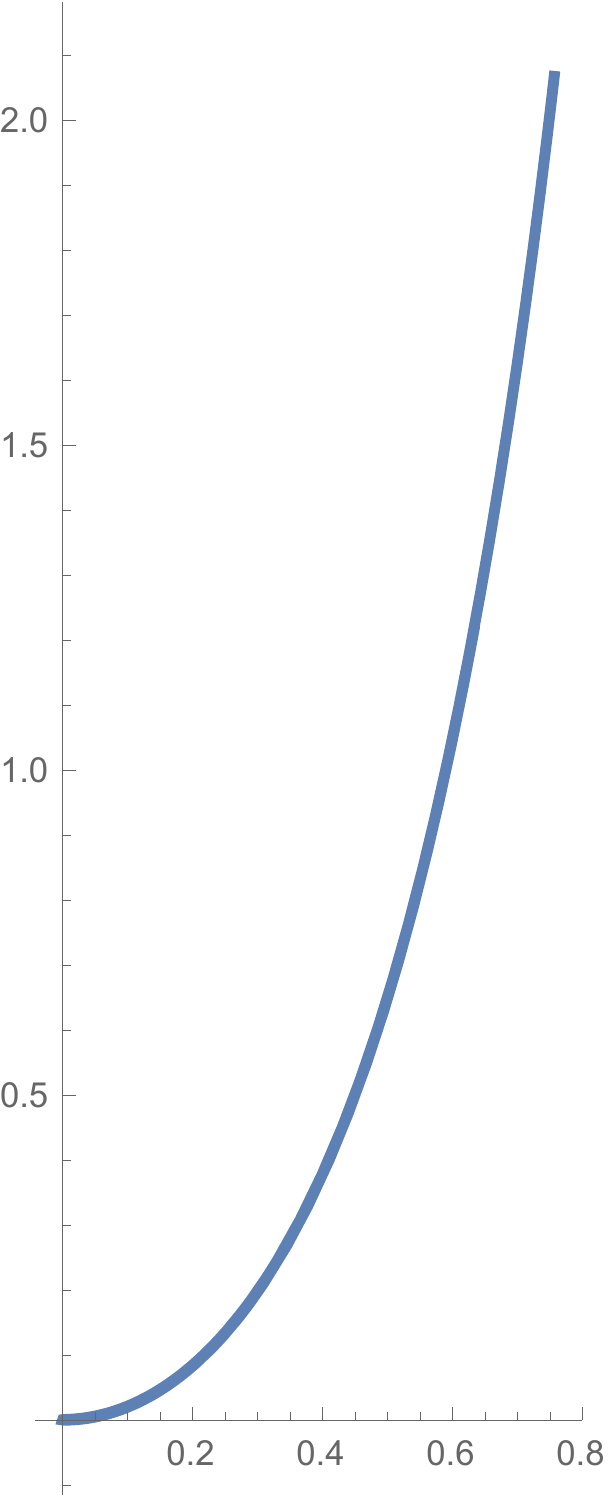}
\caption{Left: the phase space with the solution corresponded to the  bowl soliton plotted in red. Right: the profile of the bowl soliton in the model $\D\times\R$. Here we just plotted a compact piece of the  bowl soliton.}\label{bowlperfil}
\end{figure}
\end{proof}

This entire graph is called the \emph{bowl soliton}, and will be denoted throughout this paper by $\cB$ (see Fig. \ref{bowl}). The \emph{vertex} is the lowest point of $\cB$, which is also the unique point in $\cB$ that intersects the axis of rotation.

\begin{figure}[H]
\centering
\includegraphics[width=0.45\textwidth]{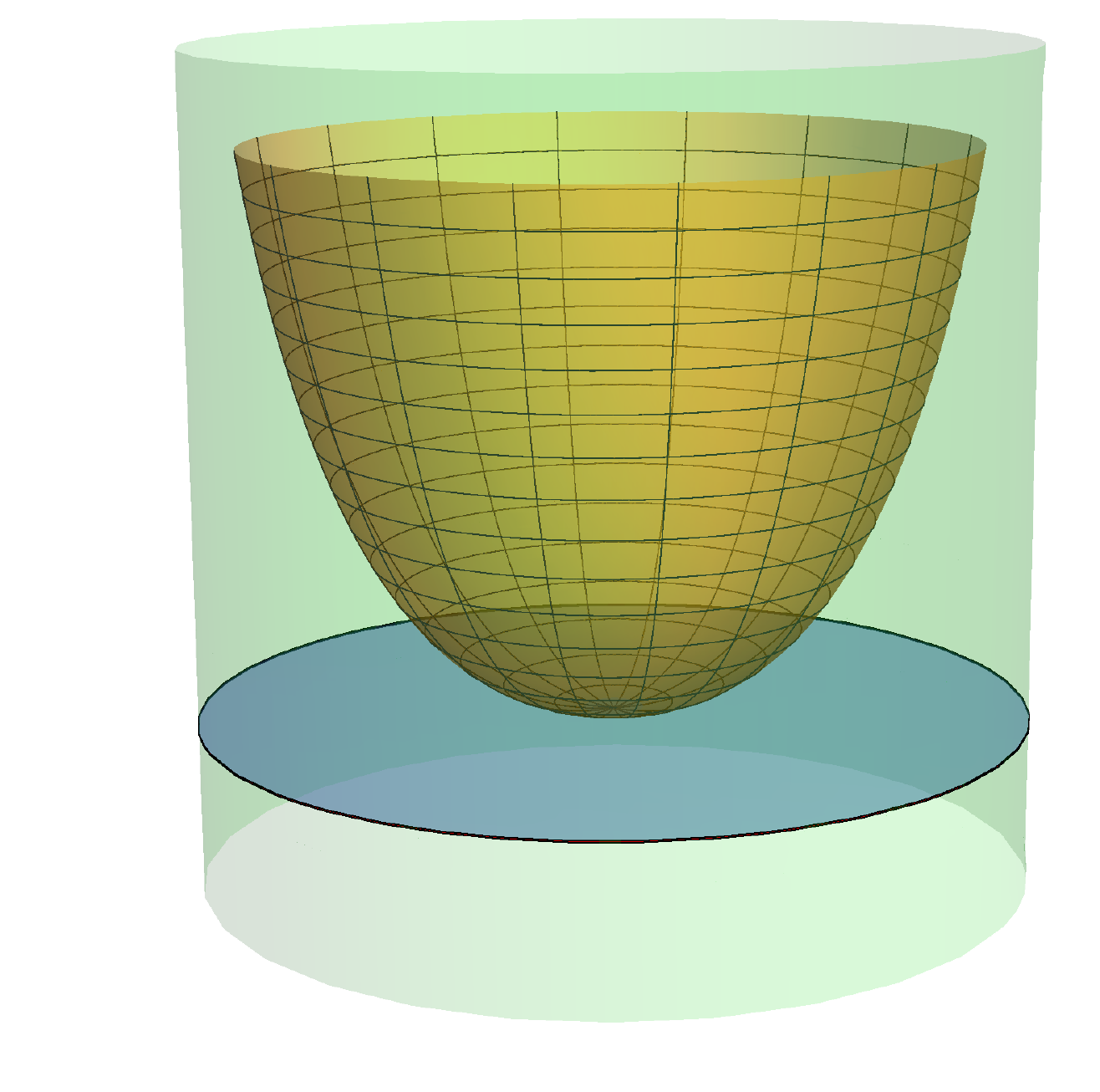}
\caption{The  bowl soliton in the Poincaré model $\D\times\R$.}\label{bowl}
\end{figure}

The main difference with the \emph{bowl soliton} in the theory of translating solitons in $\R^3$ is that although the bowl soliton has angle function tending to zero (and thus mean curvature tending to zero), here the  bowl soliton has angle function tending to $1/\sqrt{5}$, and thus the mean curvature at infinity is non-zero. However, in the space $\h2r$ no constant mean curvature spheres exist for values of the mean curvature $H\leq 1/2$. In particular, this behavior of $\cB$ does not contradict the mean curvature comparison theorem for constant mean curvatures spheres whose mean curvature approach to $1/2$.

\subsection{A one parameter family of immersed annuli: the translating catenoids}\label{excat}

The following theorem proves the existence of the analogous to the wing-like catenoids in $\R^3$.

\begin{teo}\label{catenoids}
There exists a one parameter family of properly immersed translating solitons, each one with the topology of an annulus. Each end of the annulus points to the $\partial_z$ direction, and is a vertical graph outside a compact set. These examples, denoted by $\{\cC_r\}_{r>0}$, are called the translating catenoids, or wing-like solutions.
\end{teo}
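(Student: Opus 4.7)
The plan is to construct each catenoid $\cC_{r_0}$ by following, in the phase space $\Theta_1$, the orbit of the autonomous system \eqref{1ordersys} initiated at the ``neck'' point $(r_0,0)$, in the same spirit as the construction of the bowl soliton in Theorem \ref{teobowl}, which starts from the axis point $(0,1)$. Since $F(r,y)$ is smooth and non-vanishing at $(r_0,0)$, a unique maximal $C^2$ orbit $\gamma$ through this point exists by standard ODE theory, and item~3 of Lemma \ref{comportamiento} forces $\gamma$ to cross the $r$-axis orthogonally. The resulting profile curve $\alpha(t)$ thus has a horizontal tangent at $t=0$ and lies at distance $r_0$ from the rotation axis; this will be the neck of the annulus $\cC_{r_0}$. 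The substance of the proof is the analysis of the two branches of $\gamma$ emanating from $(r_0,0)$.

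For the forward branch $(t>0)$, $\gamma$ enters $\Theta_1^+$ with $y'(0)=\coth r_0>0$, and a repetition of the argument used in Theorem \ref{teobowl}---monotonicity in the regions $\Lambda_1^\mp$, properness of orbits, and asymptotic analysis at $r=\infty$---shows that this branch is a graph $y=f_+(r)$ defined on $[r_0,\infty)$, first increasing and then decreasing after meeting $\Gamma_1$, with $f_+(r)\to 1/\sqrt{5}$ as $r\to\infty$. This yields the upper wing, a vertical graph outside a compact set whose slope $dw/dr$ tends to $2$.

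The backward branch $(t<0)$ is the delicate part. It enters $\Theta_1^-$, where $y'>0$ and $r'=y<0$ force it to be a strictly decreasing graph $y=f_-(r)$ as $r$ grows beyond $r_0$. A local analysis of \eqref{1ordersys} near the boundary $y=-1$, where $dy/dr\sim -C\sqrt{1+y}$ for some $C>0$, shows that $f_-$ attains $y=-1$ at a \emph{finite} value $r=r_*>r_0$; in arc length this is a time $t_*$ at which the tangent to the profile becomes horizontal and $w$ attains a local minimum. I would then continue the profile past this horizontal-tangent point by reparameterizing locally by $r$ (which is a regular parameter near $y=-1$ since $r'=y\to -1\ne 0$ there), show that $w(r)$ extends smoothly across $r_*$, and interpret the continuation as an orbit of \eqref{1ordersys} with $\varepsilon=-1$ issuing from the boundary point $(r_*,-1)\in\overline{\Theta}_{-1}$. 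An asymptotic analysis in $\Lambda_{-1}^+$ analogous to the one in Theorem \ref{teobowl} then shows that this continuation is a strictly increasing graph $y=g(r)$ on $[r_*,\infty)$ with $g(r)\to -1/\sqrt{5}$ as $r\to\infty$, giving the lower wing.

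Rotating the resulting profile curve about the vertical axis produces $\cC_{r_0}$, which is embedded because $r(t)$ is monotone along each wing and the two wings meet only at the neck $(r_0,0)$. Both ends are vertical graphs outside a compact set with $dw/dr\to 2$, so each grows in the $+\partial_z$ direction, and different values of $r_0>0$ give genuinely distinct surfaces since the neck distance is a geometric invariant of $\cC_{r_0}$. The main obstacle---the step requiring genuinely new work beyond the bowl-soliton argument---is the gluing of the two orbits at $y=-1$: this is a singular locus for \eqref{1ordersys} in $(r,y)$-coordinates even though the underlying geometric profile remains smooth there, and passing cleanly from an orbit in $\Theta_1$ to one in $\Theta_{-1}$ is the only non-routine step of the proof.
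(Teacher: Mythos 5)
Your proposal is correct and follows essentially the same route as the paper: the orbit of \eqref{1ordersys} through the neck point $(r_0,0)$, the monotonicity/properness analysis of the forward branch through $\Lambda_1^{-}$ and $\Lambda_1^{+}$ and of the backward branch through $\Theta_1^-$ and $\Lambda_{-1}^+$, and the switch of phase space from $\Theta_1$ to $\Theta_{-1}$ at the horizontal-tangency circle $y=-1$; your local computation $dy/dr\sim -C\sqrt{1+y}$ and the reparametrization by $r$ across $y=-1$ actually make explicit two points that the paper only asserts. The one caveat is your parenthetical claim of embeddedness: monotonicity of $r$ along each wing does not by itself exclude the two wings from crossing each other at some $r>r_0$, but since the theorem only claims the surfaces are properly \emph{immersed}, this does not affect the proof of the stated result.
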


\begin{proof}
Let $M$ be the rotational translating soliton in $\h2r$ generated by the rotation of an arc-length parametrized curve $\alpha(t)$ given by Equation \eqref{parrot}, with initial conditions $r(0)=r_0,\ w'(0)=1$, for an arbitrary $r_0>0$. The orbit $\gamma=(r(t),y(t))$ passing through $(r_0,0)$ belongs to the phase space $\Theta_1$ for $t$ close enough to zero, i.e. $\varepsilon=1$ in \eqref{1ordersys}. In this situation, we know that there are three monotonicity regions in $\Theta_1$. For $t>0$ small enough, $\gamma$ stays in $\Lambda_1^-$, and by Lemma \ref{comportamiento} we can see the second coordinate of $\gamma$, $y(t)$, as an increasing function $y(r)$ until $\gamma$ intersects $\Gamma_1$, where $y(r)$ attains a maximum. Then, $\gamma$ lies inside $\Lambda_1^+$ and stays at it, and the coordinate $y(t)$ can be seen as a decreasing function $y(r)$ converging to $y=1/\sqrt{5}$, see Fig. \ref{fasecat1}, left. With this procedure we obtain the first component $M^+$, which is a graph over the exterior of the disk $D(0,r_0)$ of $\H^2$; indeed, the only point with $y'(0)$, i.e. with vertical tangency, occurs at $t=0$. This component has the topology of $\S^1\times[0,\infty)$, and $\S^1\times\{0\}$ is just the circumference at $t=0$. The height function $w(t)$ satisfies $w'(t)>0$ for every $t>0$. If we denote $t_0>0$ to the instant where $\gamma$ intersects $\Gamma_1$, then as $\gamma\subset\Lambda_1^+$ for all $t>t_0$, we conclude hat $M^+$ is a graph outside a compact set. By properness, the height of $M^+$ is unbounded as $t\rightarrow\infty$.

\begin{figure}[H]
\hspace*{-1.2cm}
\includegraphics[width=0.55\textwidth]{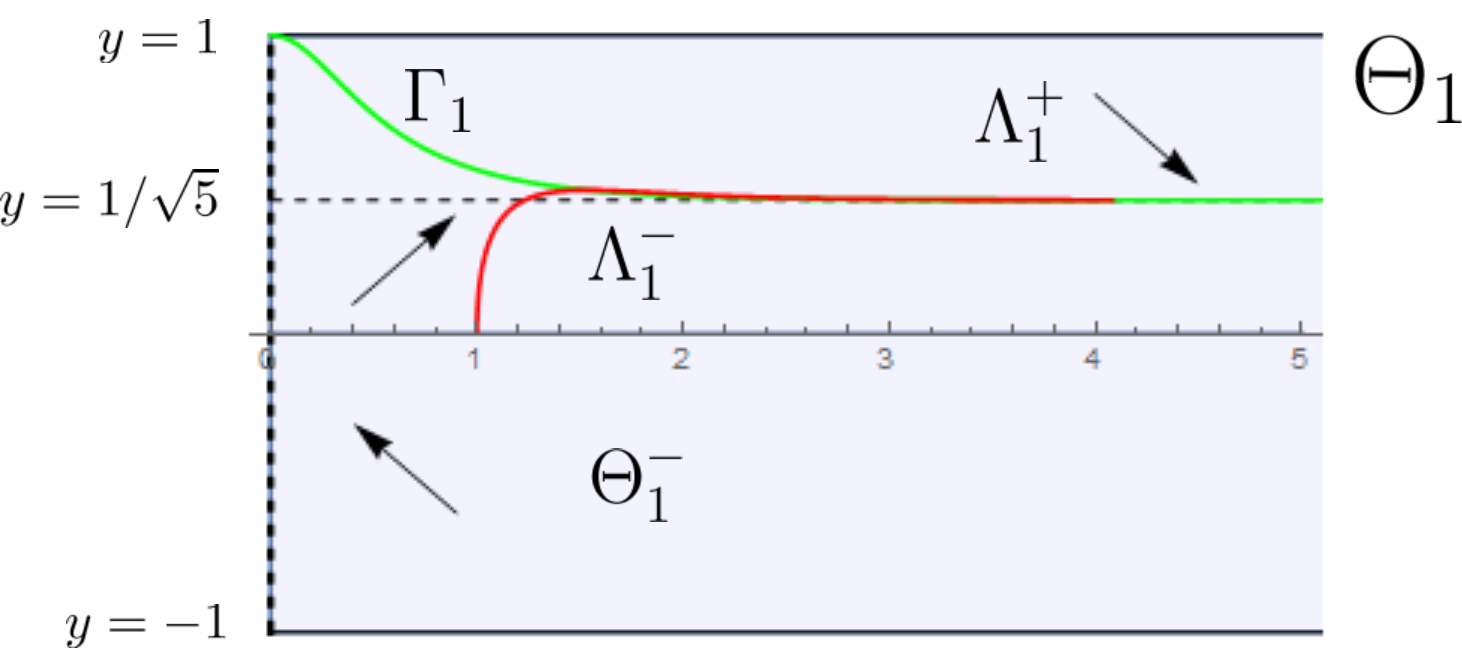}\ \ \  \includegraphics[width=0.55\textwidth]{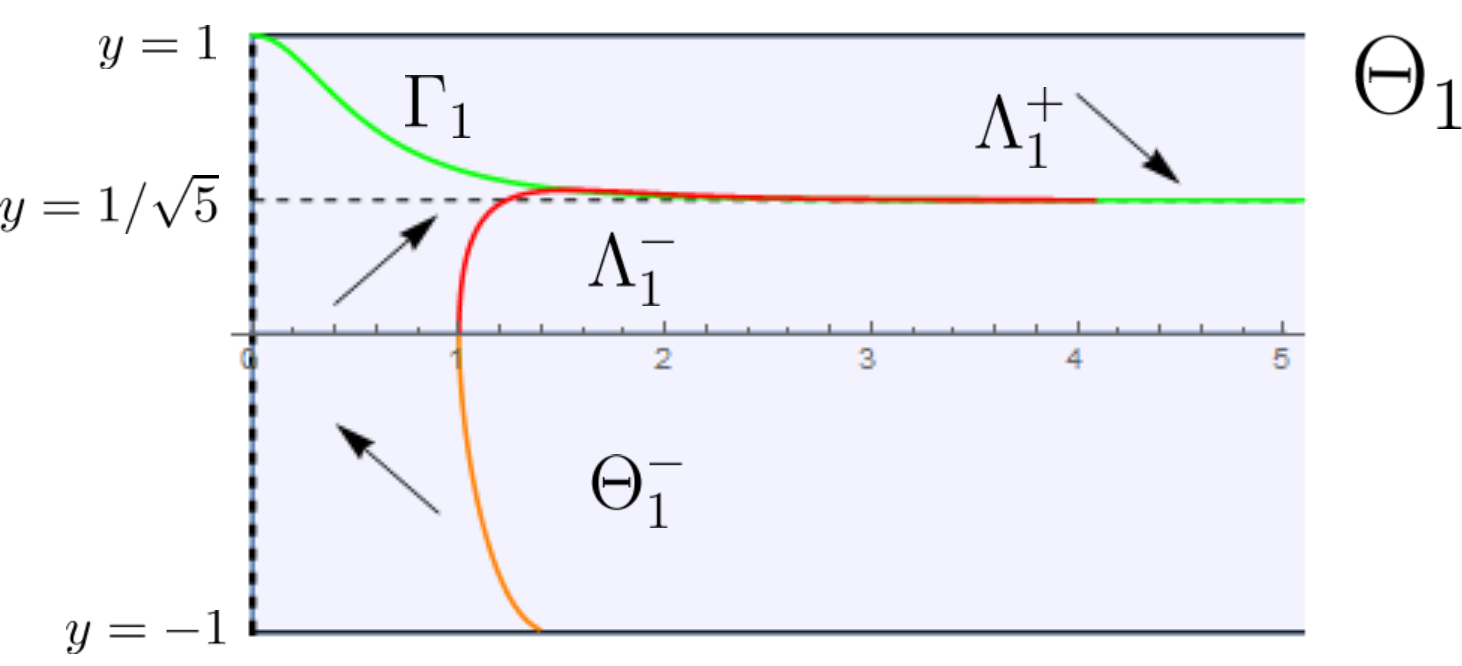}
\caption{Phase space $\Theta_1$. Left, the first component plotted in red. Right, the second component plotted in orange.}\label{fasecat1}
\end{figure}

Now we decrease the parameter $t$ from $t=0$, and the orbit $\gamma$ now lies in the region $\Theta_1^-$ and the coordinate $y(t)$ can be expressed a decreasing graph $y(r)$. Now we let $t$ decrease until $\gamma$ intersects the line $y=-1$ at a point $(r_1,-1)$, where $r_1>r_0$, see Fig. \ref{fasecat1} right. This implies that the generating curve $\alpha(t)$ has a point of horizontal tangency away from the axis of rotation. Then then the phase space changes to $\Theta_{-1}$, and $\gamma$ starts from the point $(r_1,-1)$ contained in $\Lambda_{-1}^+$. Decreasing again $t$, and by Lemma \ref{comportamiento} we ensure that the coordinate $y$ of $\gamma$ can be seen as an increasing graph $y(r)$ that lies entirely in $\Lambda_{-1}^+$ and converges to $y=-1/\sqrt{5}$, see Fig. \ref{fasecat2}, obtaining the second component $M^-$. Similar arguments ensure us that $M^-$ is a graph for all $t<0$, homeomorphic to $\S^1\times [0,\infty)$. For $t\rightarrow -\infty$, the height function $w(t)$ is an increasing function. Again, by properness the height of $M^-$ is unbounded. By uniqueness of the solution of the Cauchy problem for graphs, we can deduce that both components can be smoothly glued together along their planar boundaries, where their unit normals agree, obtaining a complete surface $M$.

\begin{figure}[H]
\centering
\includegraphics[width=0.6\textwidth]{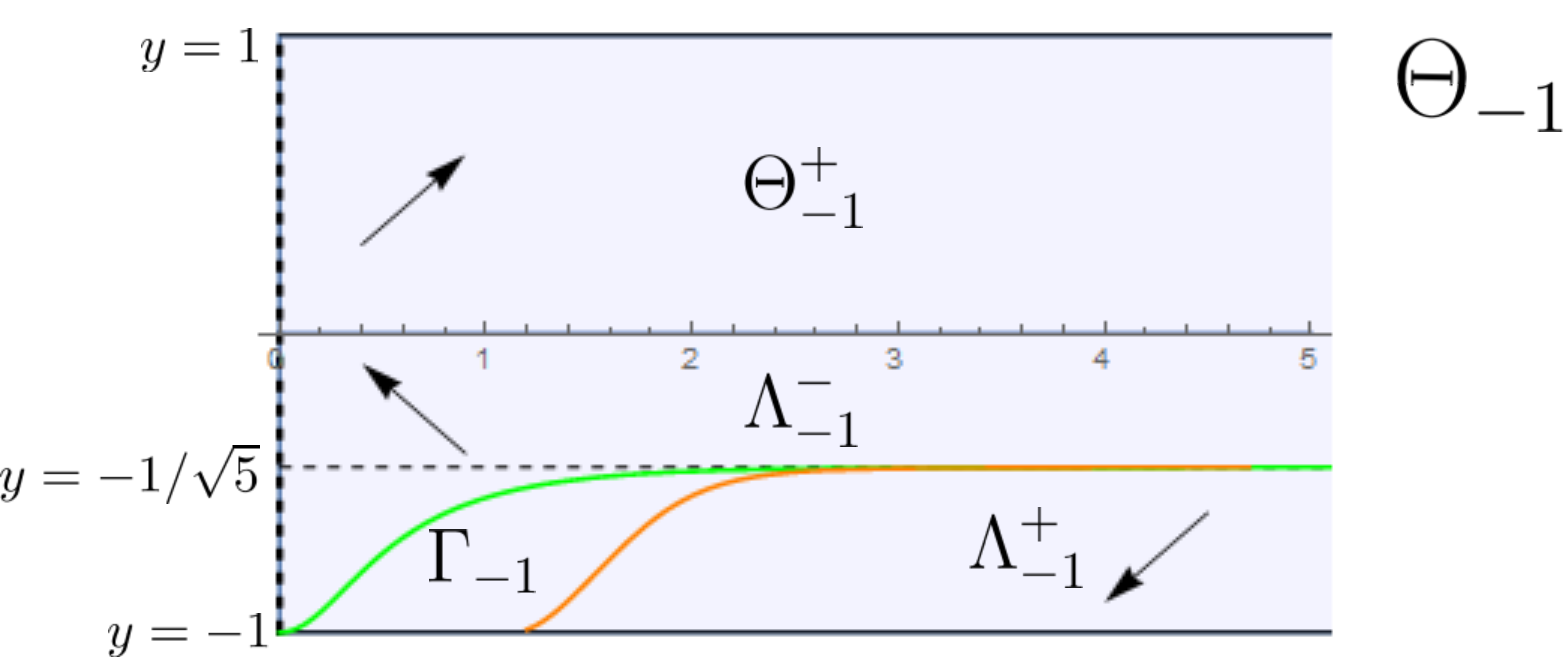}
\caption{Phase space $\Theta_{-1}$ for the second component $M^-$.}\label{fasecat2}
\end{figure}

These examples are the translating catenoids, also known as wing-like solutions. They are characterized as a one parameter family of immersed annuli $\{\cC_r\}_r$, where the parameter $r$ denotes the distance of each $\cC_r$ to the axis of rotation. From the above discussions, for each $r_0>0$ the vertical cylinder $C(0,r_0)$ of radius $r_0$ and centred at the axis of rotation, intersects $\cC_{r_0}$ at an unique circumference with radius $r_0$, which will be called the \emph{neck of the translating catenoid}. Moreover, each $\cC_{r_0}$ lies entirely inside the non-compact component of $\overline{\big(\h2r-C(0,r_0)\big)}$. In Fig. \ref{catenoid} we can see on the left the profile of one translating catenoid, and on the right that  catenoid rotated around the vertical axis passing through the origin, both plotted in the Poincaré disk model of $\h2r$. The points located at the circumference where the minimum height is achieved and have horizontal tangent plane, are those where the phase plane changes from $\Theta_1$ to $\Theta_{-1}$.
\end{proof}

\begin{figure}[H]
\hspace*{3cm}
\includegraphics[width=0.2\textwidth,valign=c]{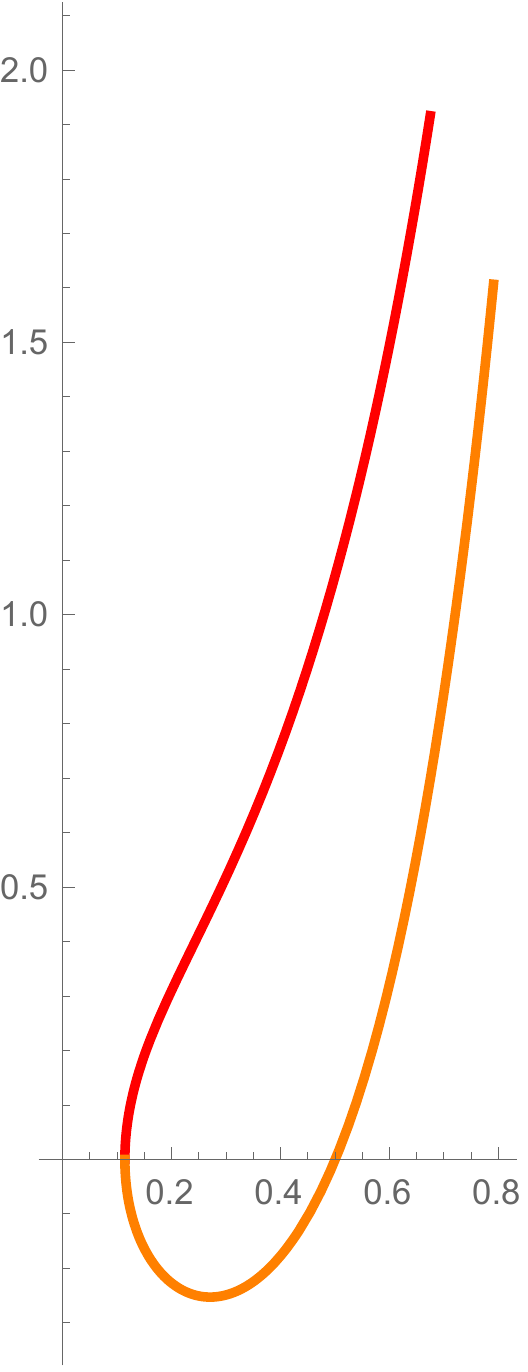}
\includegraphics[width=0.5\textwidth,valign=c]{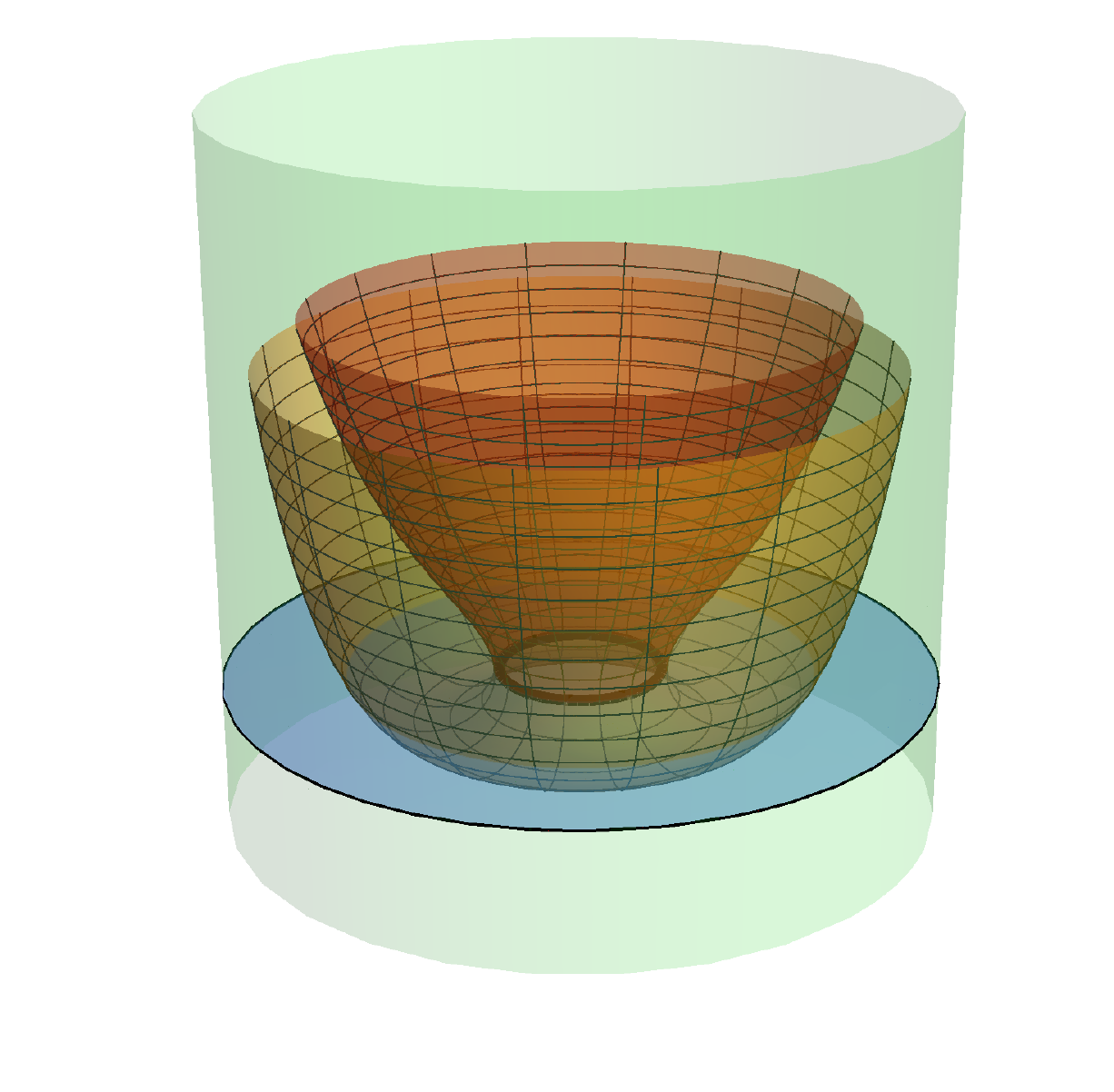}  
\caption{Left: the profile of a translating catenoid, with each component plotted in red and orange, respectively. Right: the translating catenoid in the Poincaré model $\D\times\R$. The neck is plotted in black.}\label{catenoid}
\end{figure}

\section{The asymptotic behavior of the rotational examples}\label{asintotico}

Inspired by the ideas developed in \cite{CSS}, this section is devoted to study the behavior of the bowl soliton $\cB$ and the translating catenoids $\cC_r$ at infinity. 

Let $M$ be a rotational translating soliton in $\h2r$ that is a vertical graph. Such a surface can be parametrized by
\begin{equation}\label{parrotgraph}
\psi(r,\theta)=\big(\sinh r\cos\theta,\sinh r\sin\theta,\cosh r,f(r)\big),\ t\in I,\ \theta\in (0,2\pi),
\end{equation}
for a $C^2$ function $f:I\rightarrow\R$. The angle function $\nu$ of a surface parametrized by Equation \eqref{parrotgraph} is constant in $\theta$, and is given by
\begin{equation}\label{anglerot}
\nu(r)=\frac{1}{\sqrt{1+f'(r)^2}}.
\end{equation}
As $M$ is a translating soliton, the mean curvature of $M$ satisfies $H_M(\psi(r,\theta))=\nu(r)$. This condition writes as
\begin{equation}\label{Hgrafo}
\frac{2}{\sqrt{1+f'(r)^2}}=2H_M(\psi(r,\theta))=\frac{f''(r)}{(1+f'(r)^2)^{3/2}}+\frac{f'(r)}{\sqrt{1+f'(r)^2}}\coth r,
\end{equation}
and after the change $f'(r)=\varphi(r)$,
\begin{equation}\label{cambio}
\varphi'(r)=(1+\varphi(r)^2)(2-\varphi(r)\coth r).
\end{equation}
An exhaustive analysis of Equation \eqref{cambio} allows us to study the asymptotic behavior at infinity of a rotational  soliton.

\begin{lem}\label{asin}
For any $R>0$ and $\varphi_0>0$, there exists a unique smooth solution, $\varphi(r)$ on $[R,\infty)$ to the boundary value problem
\begin{equation}\label{sistemaasin}
\left\{\begin{array}{l}
\varphi'(r)=(1+\varphi(r)^2)(2-\varphi(r)\coth r)\\
\varphi(R)=\varphi_0.
\end{array}\right.
\end{equation}
Moreover, $\lim_{r\rightarrow\infty}\varphi(r)=2$.
\end{lem}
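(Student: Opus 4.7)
The plan is to first establish local existence and uniqueness by standard ODE theory, then promote the local solution to a global one on $[R,\infty)$ via a priori barriers, and finally analyze the asymptotic behavior by comparing $\varphi$ with the nullcline $\varphi^{*}(r):=2\tanh r$.

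The right-hand side $F(r,\varphi):=(1+\varphi^{2})(2-\varphi\coth r)$ is smooth on $(0,\infty)\times\mathbb{R}$, so Picard--Lindel\"of immediately yields a unique maximal smooth solution on some interval $[R,r^{*})$. Two constant barriers give a priori bounds. Since $F(r,0)=2>0$, the line $\varphi=0$ is a lower barrier and $\varphi(r)>0$ throughout. Setting $C:=\max(\varphi_{0},2)$, we have $F(r,C)=(1+C^{2})(2-C\coth r)\leq 0$ because $C\geq 2$ and $\coth r\geq 1$, so $\varphi$ cannot exceed $C$. Thus the solution stays in the bounded interval $(0,C]$ on $[R,r^{*})$, and the standard extension theorem forces $r^{*}=\infty$.

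For the asymptotic claim I compare $\varphi$ with the nullcline $\varphi^{*}(r)=2\tanh r$, on which $F(r,\varphi^{*})=0$. The crucial monotonicity fact is that $\varphi$ can never cross $\varphi^{*}$ from below: at a hypothetical first such crossing $r_{0}$ one has $\varphi(r_{0})=\varphi^{*}(r_{0})$ and hence $\varphi'(r_{0})=0$, while $(\varphi^{*})'(r_{0})=2/\cosh^{2}(r_{0})>0$, so $\varphi-\varphi^{*}$ would be strictly decreasing at $r_{0}$, a contradiction. This dichotomy splits the analysis into two cases. In Case A, there exists some $r_{0}\geq R$ with $\varphi(r_{0})\leq\varphi^{*}(r_{0})$; by the no-crossing property, $\varphi\leq\varphi^{*}$ on $[r_{0},\infty)$, which (since $\varphi>0$) is equivalent to $\varphi\coth r\leq 2$. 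Therefore $\varphi'\geq 0$ on $[r_{0},\infty)$, so $\varphi$ is monotone non-decreasing and bounded above by $2$. In Case B, $\varphi>\varphi^{*}$ throughout $[R,\infty)$; then $\varphi'<0$ everywhere, so $\varphi$ is monotone decreasing and bounded below by $\varphi^{*}(r)\to 2$. In either case $\lim_{r\to\infty}\varphi(r)=L$ exists and is finite.

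To identify $L=2$, I pass to the limit in the ODE: since $\coth r\to 1$ and $\varphi(r)\to L$, we have $\varphi'(r)\to(1+L^{2})(2-L)$. If $L\neq 2$ this limit is nonzero, so $\varphi'$ is eventually bounded away from $0$, which is incompatible with $\varphi$ being bounded. Hence $L=2$. I expect the main obstacle to be precisely the barrier analysis with $\varphi^{*}$, namely verifying the no-crossing-from-below property cleanly and reading off the correct sign of $\varphi'$ in each case; once that is in place, the rest is monotone convergence together with a one-line limit identification.
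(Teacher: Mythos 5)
Your proof is correct, but it follows a genuinely different route from the paper's. The paper does not work directly on the scalar ODE to get existence and convergence: it translates the data $(R,\varphi_0)$ back into the phase space $\Theta_1^+$ of Section \ref{rotacionales} via $y_0=1/\sqrt{1+\varphi_0^2}$ and imports global existence, properness and the monotone convergence $y\searrow 1/\sqrt{5}$ (equivalently $\varphi\nearrow 2$) from the orbit analysis there; the bulk of its proof is then devoted to sharpening the limit into a quantitative expansion, first bounding $\varphi(r)\leq 2\tanh r$, then trapping $\varphi$ above $2(1-\varepsilon)\tanh r$ by comparison with $\xi(r)=2(1-\varepsilon)\tanh r$ (ruling out the alternative through the blow-up of $\tan(2\varepsilon r)$), and finally computing the correction term $\psi(r)$ so as to conclude $f(r)=2r+O(1)$. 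Your argument is self-contained and more elementary: Picard--Lindel\"of plus the constant barriers $\varphi=0$ and $\varphi=\max(\varphi_0,2)$ give global existence, and the nullcline $2\tanh r$ together with the no-upward-crossing observation (at a touching point $\varphi'=0<(2\tanh r)'$) yields monotonicity in each of the two cases, after which the limit is identified by passing to the limit in the equation. It also handles arbitrary $\varphi_0>0$ directly, rather than implicitly assuming the data comes from one of the rotational orbits. What you do not obtain is the refined asymptotics: the lemma as stated only claims $\varphi\to 2$, which you prove, but the paper's proof additionally extracts $\varphi=2\tanh r+o(\tanh r)$ with an explicit second-order term, and it is this stronger conclusion (the profile $f(r)$ asymptotic to $2r$ up to additive constants) that is actually invoked later in the definition of $C^1$-asymptotic and in Theorem \ref{unicidadbowl}; so your argument would need to be supplemented by that comparison step to serve the same purpose in the rest of the paper.
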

\begin{proof}

The proof is an adaptation of Lemma 2.1 in \cite{CSS} and for the sake of clarity a similar notation will be used. First, notice that fixing $(R,\varphi_0)$ is just fixing initial conditions $(x_0=R,y_0)$, where $y_0=1/\sqrt{1+\varphi^2_0}$, in the phase space $\Theta_1^+$. Thus, existence and uniqueness of the Cauchy problem ensures us the existence of an orbit $\gamma(t)=(r(t),y(t))\subset\Theta_1^+$, with the property that if $s\rightarrow\infty$, then $y(t)\searrow 1/\sqrt{5}$. This gives us the existence of a translating soliton, which is a rotational graph outside a compact set. In particular, the condition $y(t)\searrow 1/\sqrt{5}$ implies that for $s$ big enough the angle function of the solution decreases to the value $1/\sqrt{5}$. 

This translating soliton $M$ can be parametrized by Equation \eqref{parrotgraph} for a $C^2$ function $f:[R,\infty)\rightarrow\R$. In particular, as the angle function $\nu(r)$ is given by Equation \eqref{anglerot}, for $r>r_0$ with $r_0>R$ big enough, $\nu(r)>1/\sqrt{5}$, and thus $\varphi(r)<2$. Moreover, as the angle function is a decreasing function, Equation \eqref{anglerot} implies that $\varphi(r)$ is an increasing function converging to the value $2$. This implies that for $r>r_0$, $\varphi'(r)>0$, and according to Equation \eqref{sistemaasin}, $2-\varphi(r)\coth r$ is positive and remains so. Therefore, we may assume $\varphi(r)\leq 2\tanh r$ for $r>r_0$. In particular, the solution exists for all $r>R$, and by properness of the orbit $\gamma$ the solution cannot become infinite at a finite point.

Now we claim that for every $\varepsilon>0$ and $\Lambda>0$, there exists $r_1>\Lambda$ such that
\begin{equation}\label{mayor}
\varphi(r_1)\geq 2(1-\varepsilon)\tanh(r_1).
\end{equation}
If not, then we substitute the inequality in \eqref{cambio} and obtain for $r>r_0$
$$
\varphi'(r)\geq (1+\varphi(r)^2)2\varepsilon,
$$
which yields after integration
$$
\varphi(r)\geq \tan(2\varepsilon r).
$$ 
For $r$ close enough to $\pi/(4\varepsilon)$ the function $\varphi(r)$ tends to infinity, a contradiction since $\varphi(r)$ is defined for all values of $r>R$.

Let be $\varepsilon>0$ and consider the function $\xi(r)=2(1-\varepsilon)\tanh r$. It is a straightforward fact that the function $\xi(r)$ satisfies
\begin{equation}\label{desig}
\xi'(r)\leq (1+\xi^2(r))\big(2-\xi(r)\coth r\big),
\end{equation}
for $r>r_2$ and $r_2$ sufficiently large. 

Now, Equation \eqref{mayor} ensures us the existence of some $r_3>r_2$ such that $\varphi(r_3)=\xi(r_3)$. Substituting in \eqref{desig} yields  $\xi'(r_3)\leq \varphi'(r_3)$. In this situation we have $\varphi(r)$ and $\xi(r)$ two increasing functions with $\xi(r_3)=\varphi(r_3)$ and  $\varphi'(r)\geq \xi'(r)$ for $r>r_3$. This implies that $\varphi(r)\geq\xi(r)$ for all $r>r_3$. In particular, for $r$ close enough to infinity the function $\varphi(r)$ has the bound
$$
2(1-\varepsilon)\tanh r\leq\varphi(r)\leq 2\tanh r.
$$

Since this is true for every $\varepsilon>0$ and every $r$ large enough, we conclude that $\varphi(r)$ has the asymptotic behavior
\begin{equation}\label{asin1}
\varphi(r)=2\tanh r+o(\tanh r).
\end{equation}

Because $\tanh r$ is a bounded function, the term $o(\tanh r)$ in Equation \eqref{asin1} can be substituted by a negative function $\psi(r)$ tending to zero. Moreover, as we are only interested in asymptotic behavior we can suppose without losing generality that $\psi'(r)>0$ for $r>r_4$ and $r_4$ big enough.

Now let us figure out the asymptotic expression of $\psi(r)$. First of all, observe that because $\psi(r)$ is a negative function tending to zero, then $\psi'(r)$ also tends to zero as $r\rightarrow\infty$; if not, $\psi(r)$ would become positive, a contradiction. If we substitute $\varphi'(r)=2/\cosh^2r+\psi'$ in the ODE given in Equation \eqref{sistemaasin} we get that $\psi'(r)$ satisfies
\begin{equation}\label{deripsi}
\psi'(r)=-\frac{\psi(r)}{\tanh r}\big(1+(2\tanh r+\psi(r))^2\big)-\frac{2}{\cosh^2r}.
\end{equation}
As for $r>r_4$, $\psi'(r)>0$, we get the first bound
\begin{equation}\label{cotainf}
-\psi(r)> \frac{2\tanh r}{\cosh^2 r(1+4\tanh^2r)},\ \forall r>r_4.
\end{equation}

On the other hand, $-\psi(r)$ decreases and tends to zero and thus $-\psi'(r)$ is a negative function tending to zero. This implies that for every $\varepsilon_0>0$, there exists $r_5>0$ large enough such that for every $r>r_5$ we have
$$
-\varepsilon_0<-\psi'(r)=\frac{\psi(r)}{\tanh r}\big(1+(2\tanh r+\psi(r))^2\big)+\frac{2}{\cosh^2r},
$$
which yields
$$
-\psi(r)\big(1+(2\tanh r+\psi(r))^2\big)<\varepsilon_0\tanh r +\frac{2\tanh r}{\cosh^2r}.
$$

As $\psi(r)>-\varepsilon_0$ for $r>r_5$ big enough, we also obtain
$$
-\psi(r)\big(1+(2\tanh r-\varepsilon_0)^2\big)<-\psi(r)\big(1+(2\tanh r+\psi(r))^2\big),
$$
which yields the other bound for $\psi$
\begin{equation}\label{cotasup}
-\psi(r)<\frac{2\tanh r}{\cosh^2 r\big(1+(2\tanh r-\varepsilon_0)^2\big)}+\varepsilon_0\frac{\tanh r}{\big(1+(2\tanh r-\varepsilon_0)^2\big)}.
\end{equation}

As inequality \eqref{cotasup} holds for every $\varepsilon_0>0$, and because $\tanh r/\big(1+(2\tanh r-\varepsilon_0)^2\big)$ is a bounded function, joining \eqref{cotainf} and \eqref{cotasup} we ensure that  $-\psi(r)$ is asymptotic to the function
$$
\frac{2\tanh r}{\cosh^2 r(1+4\tanh^2r)}.
$$

As we defined $\varphi(r)=f'(r)$, we conclude once and for all that a rotational translating soliton that is a graph tending to infinity, is asymptotic to the rotational translating soliton generated by the graph
$$
f(r)=2\log\cosh r+\frac{1}{4}\log\frac{\cosh^2r}{5\cosh(2r)-3},\hspace{.5cm} r>\max\{r_i\}_{i=1,\dots,6}.
$$
On the one hand, the function $\log\cosh r$ satisfies 
$$
\lim_{r\rightarrow\infty}\frac{\log\cosh r}{r}=1.
$$
On the other hand, the function $\log\cosh^2r/(5\cosh(2r)-3)$ is bounded; in fact, its limit is $-\log 10$. Thus, the function $f$ has the asymptotic expansion at infinity
$$
f(r)=2r+k,\ k\in\R.
$$

As Equation \eqref{Hgrafo} is invariant up to additive constants to the function $f(r)$, we conclude that up to vertical translations, the function $f(r)$ has the asymptotic expression $f(r)=2r$ at infinity.
\end{proof}

We want to finish this section by remarking some similarities and differences between the asymptotic behavior that translating solitons and minimal surfaces in $\h2r$ have. Consider the family of translating catenoids $\{\cC_r\}_r$ in $\h2r$. Then, it can be proved that if $r\rightarrow 0$, $\{\cC_r\}_r$ smoothly converges to a double covering of $\cB-\{\textbf{v}\}$, where $\textbf{v}$ is the vertex of $\cB$. 

This also happens in the minimal surface theory in $\h2r$: if consider a vertical axis of rotation $L$, then the minimal surfaces of revolution around $L$ are totally geodesic copies of $\H^2$, and thus minimal, orthogonal to $L$ and a one parameter family of rotationally symmetric properly embedded annuli, the minimal catenoids $\{C_\lambda\}_\lambda$. Here, the parameter $\lambda$ also indicates the distance to $L$, and corresponds to the smallest circumference contained in each catenoid, which is also known as the neck. These minimal catenoids are symmetric bi-graphs over a minimal plane $\Pi$ orthogonal to $L$, and when their neck-sizes converge to zero they converge to a double covering of $\Pi-(\Pi\cap L)$. In this situation, it is natural to relate minimal planes with the  bowl soliton, and minimal catenoids with translating catenoids.

Both the translating catenoids and the minimal catenoids stay at bounded distance to the bowl soliton and the minimal plane, respectively. In particular, this bound on the distance from the translating catenoids to the bowl disables us to apply the same ideas as in \cite{HoMe} in order to obtain \emph{half-space theorems} for properly immersed translating solitons lying at one side of the bowl.

\section{Uniqueness and non-existence theorems for translating solitons}\label{uniqueness}

Most of the results obtained in this section will be proved with the same method: we will use the translating solitons studied in Section \ref{rotacionales} as \emph{canonical surfaces} to compare with, and Theorem \ref{tangency} to arrive to contradictions if some interior tangency point exists. 

\subsection{The uniqueness of the  bowl soliton}

The aim of this first theorem is to give an analogous result to Theorem A in Section 3 in \cite{MSHS}: a complete, embedded translating soliton in the Euclidean space $\R^n$, with a single end smoothly asymptotic to the translating bowl must be a vertical translation of the bowl. Their proof uses Alexandrov reflection technique with respect to vertical planes coming from infinity, and the asymptotic behavior at infinity of the bowl to ensure that Alexandrov reflection technique can start from points close to infinity.

In this section we consider the Poincaré disk model of $\H^2$, as introduced in the beginning of Section \ref{examples}. The hyperbolic distance from a point $(x_1,x_2,0)$ to the origin $(0,0,0)$ will be denoted by $r(x_1,x_2)$

We also give next the following definition:

\begin{defi}
Let $M$ be a properly immersed translating soliton. We say that $M$ is $C^1$-asymptotic to the bowl soliton $\cB$ if for all $\varepsilon>0$ there exists $R>0$ big enough such that $M\cap \big(\h2r-B^3(0,R)\big)$ can be expressed as the graph of a function $g_R:\big(\H^2-B^2(0,R)\big)\rightarrow\R$ such that
\begin{equation}\label{smoothlyasympt}
|g_R(x_1,x_2)-2r(x_1,x_2)|<\varepsilon,\hspace{.3cm} |D_vg_R(x_1,x_2)-2D_vr(x_1,x_2)|<\varepsilon,\  \forall\ r(x_1,x_2)>R,\ \forall\ v\in \H^2,\ |v|=1.
\end{equation}

\end{defi}
Notice that in Lemma \ref{asin1} we already proved that the radial graph defined by the function $f(r)=2r$ converges asymptotically the bowl soliton. Thus, Equation \eqref{smoothlyasympt} implies not only that $M$ converges in distance to $\cB$, but also in its first derivative.

\begin{teo}\label{unicidadbowl}
Let $M$ be a properly immersed translating soliton with a single end, that is $C^1$-asymptotic to the  bowl soliton $\cB$. Then, $M$ is a vertical translation of $\cB$.
\end{teo}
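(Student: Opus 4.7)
The approach is an Alexandrov moving-plane argument, adapting the strategy of \cite{MSHS} from $\R^n$ to $\h2r$. Euclidean affine planes are replaced by the vertical planes $\Pi=\gamma\times\R$ with $\gamma\subset\H^2$ a geodesic. Three facts make this adaptation available: every vertical plane is itself a translating soliton (end of Section \ref{examples}); the reflection $R_\Pi$ across a vertical plane is an ambient isometry of $\h2r$ fixing $\partial_z$, so it maps translating solitons to translating solitons; and the bowl $\cB$ is invariant under $R_\Pi$ precisely when $\Pi$ contains its axis of rotation $\ell$.

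Let $p_0\in\H^2$ be the foot of $\ell$ and fix any geodesic $\sigma\subset\H^2$ through $p_0$. For $t\in\R$, let $q_t\in\sigma$ be the point at signed distance $t$ from $p_0$ along $\sigma$, and set $\Pi_t=\gamma_t\times\R$, where $\gamma_t$ is the geodesic of $\H^2$ orthogonal to $\sigma$ at $q_t$; denote by $W_t^+$ the component of $\h2r\setminus\Pi_t$ containing $q_s$ for $s>t$. My aim is to show $R_{\Pi_0}(M)=M$. Since $\sigma$ is arbitrary, this will give invariance of $M$ under reflection across every vertical plane through $\ell$, and hence rotational symmetry of $M$ around $\ell$.

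To run the moving plane, I will first use the $C^1$-asymptoticity \eqref{smoothlyasympt}, together with the $R_{\Pi_0}$-invariance of $\cB$, to show that for $t$ large enough the reflected cap $R_{\Pi_t}(M\cap W_t^+)$ lies strictly on one side of $M$ within $W_t^-$, both surfaces being graphs over $\H^2$ outside a compact set by the graphical form of \eqref{smoothlyasympt}. Decreasing $t$, let $t^\ast\geq 0$ be the infimum of values of $t$ preserving this strict separation. If $t^\ast>0$, a first contact point $p$ would appear at finite distance: contact at infinity is excluded because the asymptotic limits of $M$ and $R_{\Pi_{t^\ast}}(M)$ are then $\cB$ and $R_{\Pi_{t^\ast}}(\cB)$, two distinct bowls with different axes, which are bounded apart. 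Applying the tangency principle (Theorem \ref{tangency}) at $p$ in its interior or boundary version, $M$ and $R_{\Pi_{t^\ast}}(M)$ agree on a neighborhood, and since translating solitons solve a quasilinear elliptic PDE with analytic coefficients they are real-analytic, so the two surfaces coincide globally. This would make $\Pi_{t^\ast}$ a plane of symmetry of $M$, contradicting the $C^1$-asymptoticity to $\cB$: within the family $\{\Pi_t\}_{t\in\R}$ only $\Pi_0$ is a plane of symmetry of $\cB$. Hence $t^\ast=0$, and rotational symmetry of $M$ around $\ell$ follows.

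Once $M$ is rotationally symmetric, the classification of Section \ref{rotacionales} completes the proof: the only complete rotationally symmetric translating solitons in $\h2r$ are vertical translates of $\cB$ (Theorem \ref{teobowl}) and the translating catenoids $\cC_r$ (Theorem \ref{catenoids}). The latter have two ends, so the single-end hypothesis forces $M$ to be a vertical translate of $\cB$. The principal obstacle is the first step, namely starting the moving-plane argument at $t=+\infty$ and excluding contact at infinity whenever $t^\ast>0$. Both rely on the derivative control in \eqref{smoothlyasympt}: a merely $C^0$ bound would not provide the tangent-plane matching required to ensure strict separation of the reflected cap for $t$ large, nor to compare the two asymptotic axes quantitatively.
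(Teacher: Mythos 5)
Your proposal follows essentially the same route as the paper: Alexandrov reflection with respect to the vertical planes $\Pi_v(s)=T_s(\gamma\times\R)$ orthogonal to a horizontal geodesic through the foot of the axis, started near infinity by means of the derivative control in \eqref{smoothlyasympt}, with the tangency principle (Theorem \ref{tangency}) ruling out a first contact at a finite interior or boundary point because only the plane through the axis can be a symmetry plane compatible with the asymptotics to $\cB$. The paper organizes this as the set $\mathcal{A}=\{s\geq 0:\ M_+(s)\ \text{is a graph over}\ \Pi_v\ \text{and}\ M_+^*(s)\geq M_-(s)\}$, proves it is a non-empty closed half-line and that $\min\mathcal{A}=0$; your $t^\ast$ is exactly its infimum, and your closing remark that a mere $C^0$ bound would not suffice to start the sweep is precisely the point the paper uses when proving $\mathcal{A}\neq\varnothing$.

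There is, however, one genuine gap. You conclude ``hence $t^\ast=0$, and rotational symmetry of $M$ around $\ell$ follows.'' Reaching $t^\ast=0$ only yields the one-sided conclusion $M_+^*(0)\geq M_-(0)$ (the reflected right cap lies on the right-hand side of the left cap); by itself this does not force $R_{\Pi_0}(M)=M$, since the two caps could in principle remain strictly separated at $s=0$, leaving no contact point to feed into the tangency principle. The paper closes this by running the entire sweep a second time from the opposite direction, via $\mathcal{A}_-=\{s\leq 0:\ M_-(s)\ \text{is a graph over}\ \Pi_v\ \text{and}\ M_-^*(s)\leq M_+(s)\}$, obtaining $M_-^*(0)\leq M_+(0)$; reflecting this inequality gives $M_+^*(0)\leq M_-(0)$, and the two inequalities together force $M_+^*(0)=M_-(0)$, i.e.\ $\Pi_v$ is a plane of reflection symmetry. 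You need to insert this second sweep (or an equivalent two-sided comparison) before invoking the arbitrariness of $v$. The remainder of your argument, including the final observation that rotational symmetry plus the single-end hypothesis excludes the catenoids $\cC_r$ and leaves a vertical translate of $\cB$, matches the paper, which compresses that last step into ``by uniqueness.''
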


\begin{proof}
Let $M$ be a properly immersed translating soliton with a single end that is $C^1$-asymptotic to the bowl soliton $\cB$. Given a unit vector $v\in\H^2$ and the horizontal direction $(v,0)$ in $\h2r$, the vertical plane orthogonal to $v$ passing through the origin is the totally geodesic surface of $\h2r$ given by the product $\gamma\times\R$, where $\gamma=\gamma(s)\subset\H^2\times\{0\}$ is the arc-length parametrized, horizontal geodesic of $\h2r$ such that $\gamma(0)=(0,0,0)$ and $\gamma'(0)\bot v$. This surface will be denoted by $\Pi_v$. 

Let $\sigma(s)$ be the horizontal geodesic such that $\sigma(0)=(0,0,0)$ and $\sigma'(0)=v$; recall that $\sigma$ and $\gamma$ differ one from the other by a rotation, and thus their arc-length parameter coincides. Consider the 1-parameter family of hyperbolic translations $\{T_s\}$ along the geodesic $\sigma$ such that $T_s(\sigma(0))=\sigma(s)$ for all $s$. We define $\{\Pi_v(s):=T_s(\Pi_v)\}_s$ as the family of vertical planes in $\h2r$ at distance $s$ to $\Pi_v$ and orthogonal to $\sigma'(s)$ at $\sigma(s)$. We denote by $\Pi_v(s)^+$ (resp. $\Pi_v(s)^-$) to the closed half-space $\bigcup_{\lambda\geq s}\Pi_v(\lambda)$ (resp. $\bigcup_{\lambda\leq s}\Pi_v(\lambda)$), and by $M_+(s)$ (resp. $M_-(s)$) to the intersection $M\cap\Pi_v(s)^+$ (resp. $M\cap\Pi_v(s)^-$). The reflection of $M_+(s)$ with respect to the plane $\Pi_v(s)$ will be denoted by $M_+^*(s)$. If $p\in M$ and $p^*\in M^*$ denote the reflected point of $p$ in the reflected surface of $M$, then $\nu^*(p^*)=\nu(p)$, where $\nu^*$ is the angle function of $M^*$. In particular, reflections with respect to vertical planes are isometries of $\h2r$ that send translating solitons into translating solitons. 

Denote by $\mathfrak{p}$ the projection onto the plane $\Pi_v$ defined as follows: Let be $x\in\h2r$ and consider the curve $\alpha_x(s)=T_s(x)$ given as the flow of $T_s$ passing through $x$. Then, we define $\mathfrak{p}(x)$ as the intersection of $\alpha_x(s)$ with the plane $\Pi_v$. This intersection is unique, and thus $\mathfrak{p}(x)$ is well defined. Moreover, after a translation in the arc-length parameter of $\alpha_x(s)$, we will suppose henceforth that $\alpha_x(0)=\mathfrak{p}(x)$.

For a point $x\in\h2r$, let us denote by $I(x)$ to the instant of time $s_0$ such that $\alpha_x(s_0)=x$. We say that $A$ is on the \emph{right hand side} of $B$ if for every $x\in\Pi_v$ such that
$$
\mathfrak{p}^{-1}(x)\cap A\neq\varnothing,\ \ \mathrm{and}\ \ \mathfrak{p}^{-1}(x)\cap B\neq\varnothing,
$$
we have
$$
\inf\big\{I\big(\mathfrak{p}^{-1}(x)\cap A\big) \big\}\geq\sup\big\{I\big(\mathfrak{p}^{-1}(x)\cap B\big) \big\}.
$$
The condition \emph{$A$ is on the right hand side of $B$} is denoted by $A\geq B$, see Figure \ref{proj}.

\begin{figure}[H]
\centering
\includegraphics[width=.85\textwidth]{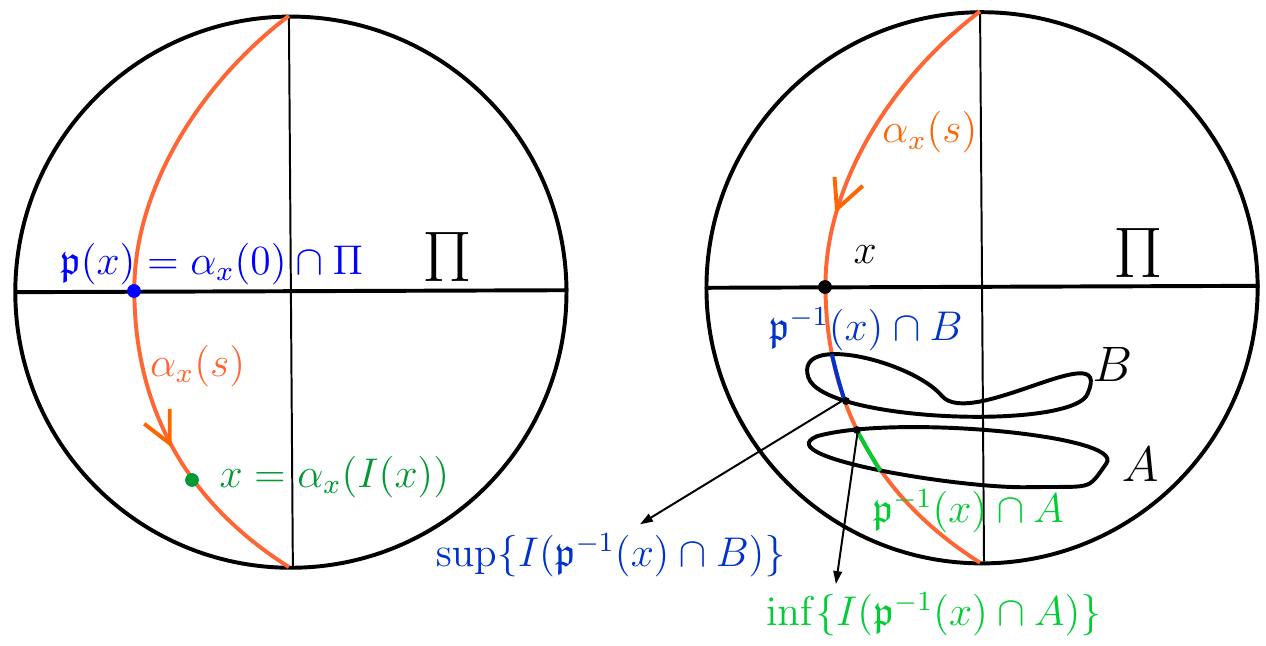}
\caption{Left: the definition of the projection $\mathfrak{p}$. Right: an example of one set $A$ on the right hand side of other set $B$.}\label{proj}
\end{figure}

Now we define the set
$$
\mathcal{A}=\{s\geq 0;\ M_+(s)\ \mathrm{is\ a\ graph\ over}\ \Pi_v,\ \mathrm{and}\ M_+^*(s)\geq M_-(s)\}.
$$
First, we show that $\mathcal{A}\neq\varnothing$. As $M$ is $C^1$-asymptotic to $\mathcal{B}$, for an arbitrary small $\varepsilon>0$ we can choose $s_0>R$ big enough so the intersection $M\cap\{z> s_0\}$ has the topology of an annulus and has distance at most $\varepsilon$ to $\mathcal{B}$. The reflection $M_+^*(s_0)$ is also asymptotic to $\mathcal{B}_+^*(s_0)$ with distance less than $\varepsilon$, and thus does not intersect the surface $M_-(s_0)$ at any interior or boundary point. Moreover, as $\mathcal{B}_+(s_0)$ is a graph onto the plane $\Pi_v$ and $M$ is $C^1$-asymptotic to $\mathcal{B}$, by Equation \eqref{smoothlyasympt} there exists $r_0>R$ such that $D_vg_R (x_1,x_2)>0$, for any $(x_1,x_2)\in\H^2$ such that $r(x_1,x_2)>r_0$. Consequently, and increasing $s_0>r_0$ if necessary, if $\eta$ is a unit normal vector field on $M$ then $\langle\eta,v\rangle>0$ for any point in $M_+(s_0)$. This implies that $M_+(s_0)$ is a graph onto $\Pi_v$.

As $\mathcal{B}_+^*(s_0)$ lies inside the interior domain bounded by $\mathcal{B}$, increasing $s_0$ again if necessary, we can suppose that the distance between $\mathcal{B}_+^*(s_0)$ and $\mathcal{B}_-(s_0)\cap\{z> s_0\}$ is greater than $2\varepsilon$. This implies that $M_+^*(s_0)$ is on the right hand side of $M_-(s_0)$ and thus $s_0\in\mathcal{A}$, proving that $\mathcal{A}$ is a non-empty set. Moreover, we ensure that if $s_0\in\mathcal{A}$, then $[s_0,\infty)\subset\mathcal{A}$. If the assertion fails, then there exists $s_*>s_0$ such that $s_*\notin\mathcal{A}$, and this holds necessary because either $M_+^*(s_*)$ and $M_-(s_*)$ have non-empty intersection, or $M_+(s_*)$ is not a graph onto the plane $\Pi_v$. Thus, there exists $s_1\in(s_0,s_*]$ such that $M_+^*(s_1)$ intersects $M_-(s_1)$ for the first time in an interior point, or $M_+(s_1)$ and $M_+^*(s_1)$ have unit normal agreeing at the boundary. The tangency principle in Theorem \ref{tangency} ensures in its interior or boundary version that $M_+^*(s_1)$ and $M_-(s_1)$ agree, and so the plane $\Pi_v(s_1)$ would be a plane of reflection symmetry of $M$. But $\cB_+^*(s_1)$ and $\cB_-(s_1)$ stay one to each other at a positive distance, since $\Pi_v(s_1)$ is not a plane of reflection symmetry of $\cB$. This is a contradiction to the fact that $M$ is $C^1$-asymptotic to $\cB$, since the reflected component $M_+^*(s_1)$, which agrees with $M_-(s_1)$, stays at positive distance to $\cB_-(s_1)$ . This concludes the proof that $[s_0,\infty)\subset\mathcal{A}$.

The next step is proving that $\mathcal{A}$ is a closed subset of the interval $[0,\infty)$. Indeed, let $\{s_n\}$ be a sequence of points in $\mathcal{A}$ converging to some $s_0$. According to the previous discussion, we have $(s_0,\infty)\subset\mathcal{A}$. First, suppose that $M_+(s_0)$ is not a graph onto the plane $\Pi_v$. Then there exists points $p\neq q\in M_+(s_0)$ such that $\mathfrak{p}(p)=\mathfrak{p}(q)$ and $I(q)>I(p)$. Notice that $I(p)=s_0$, since $s\in\mathcal{A}$ for all $s>s_0$. Let be $I(q)=s_1>s_0$. If we consider the plane $\Pi_v\big({\big(3s_0+s_1\big)\big/4}\big)$, then $M_+^*\big((3s_0+s_1)/4\big)$ cannot be on the right hand side of $M_-\big((3s_0+s_1)/4\big)$ since $I(q^*)<I(p)$, contradicting the fact that $s_0<\big(3s_0+s_1\big)/4\in\mathcal{A}$. The continuity of the graphical condition yields $M_+^*(s_0)\geq M_-(s_0)$ and hence $s_0\in\mathcal{A}$.

Now we will prove that the minimum of the set $\mathcal{A}$ is 0. To prove this, we suppose that $\min\mathcal{A}=s_*>0$, and will arrive to a contradiction. Indeed, if $s_*>0$ then one of the following items must hold:

\begin{itemize}
\item There exists a point $p\in M\cap\Pi_v(s_*)$ such that $M_+(s_*)$ is not a graph at $p$ onto the plane $\Pi_v$. This implies that $\langle\eta_p,v\rangle=0$, where $\eta$ is a unit normal for the surface $M$.

\item There exists a point $p\in M$ such that $M_+^*(s_*)$ and $M_-(s_*)$ have no empty intersection at some $p$, and $M_+^*(s_*)$ lies at the right hand side of $M_-(s_*)$.

\end{itemize}

Notice that in the second item, the intersection between $M_+^*(s_*)$ and $M_-(s_*)$ must be tangential; otherwise for $\varepsilon>0$ small enough $M_+^*(s_*+\varepsilon)$ and $M_-(s_*+\varepsilon)$ would still have a transversal intersection, contradicting that $s_*=\min\mathcal{A}$.

In any case, Theorem \ref{tangency} in its interior or boundary version ensures us that the plane $\Pi_v({s_*})$ is a plane of reflection symmetry of $M$. As the $z$-axis is the axis of rotation of the  bowl soliton $\mathcal{B}$, and every plane of reflection symmetry of $\mathcal{B}$ contains the $z$-axis, the symmetrized $\mathcal{B}_+^*(s_*)$ is on the right hand side of $\mathcal{B}_-(0)$ and lies at a positive distance $d>0$. By hypothesis $M_+^*(s_*)=M_-(s_*)$ has distance to $\mathcal{B}_+^*(s_*)$ tending to zero. Thus, $M_-(s_*)$ has distance to $\mathcal{B}_-(0)$ bounded from below, contradicting the fact that $M$ is $C^1$-asymptotic to $\mathcal{B}$. This implies that $\min\mathcal{A}=0$ and thus we have that $M_+^*(0)\geq M_-(0)$. If we repeat this argument by defining 
$$
\mathcal{A_-}=\{s\leq 0;\ M_-(s)\ \mathrm{is\ a\ graph\ over}\ \Pi_v\ \mathrm{and}\ M_-^*(s)\leq M_+(s)\},	
$$
then we conclude that $M_-^*(0)\leq M_+(0)$. By symmetrizing again we obtain $M_-(0)\geq M_+^*(0)$, and so $M_-(0)=M_+^*(0)$; that is, the plane $\Pi_v$ is a plane of reflection symmetry of the surface $M$. As $v$ was chosen as an arbitrary horizontal vector, we conclude that $M$ is rotationally symmetric around the $z$-axis.  By uniqueness, $M$ is a vertical translation of the  bowl soliton $\mathcal{B}$, completing the proof.
\end{proof}

The following proposition concerning the height function of a translating soliton will be useful:

\begin{pro}\label{hmaximo}
Let $M$ be a compact translating soliton with boundary. Then, the height function of $M$ cannot attain a local maximum in any interior point of $M$.
\end{pro}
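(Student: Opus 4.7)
The plan is to show that the restriction $h|_M$ is subharmonic on $M$ with respect to the induced Laplacian $\Delta^M$, and then conclude via Hopf's strong maximum principle.

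First I would exploit that $\partial_z$ is a parallel vector field in the Riemannian product $\h2r$. Decomposing $\nabla^M h=(\partial_z)^T=\partial_z-\nu\eta$ and using a local orthonormal frame $\{e_1,e_2\}$ of $TM$, I would compute
$$
\Delta^M h \,=\, \sum_{i=1}^{2}\langle \nabla_{e_i}(\partial_z-\nu\eta),\,e_i\rangle \,=\, -\nu\sum_{i=1}^{2}\langle \nabla_{e_i}\eta,\,e_i\rangle \,=\, 2\nu H_M,
$$
using $\nabla_{e_i}\partial_z=0$, the orthogonality $\eta\perp e_i$, and the identity $2H_M=-\sum_i\langle\nabla_{e_i}\eta,e_i\rangle$ matched to the sign convention of \eqref{hyperbolicsoliton} (i.e.\ the one for which the vertex of $\cB$ has $H_M=\nu=1$ with respect to the upward unit normal). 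Invoking the translating soliton equation $H_M=\nu$ collapses this to
$$
\Delta^M h \,=\, 2H_M^{\,2} \,\geq\, 0,
$$
so $h|_M$ is subharmonic.

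Suppose, for contradiction, that $h|_M$ attains an interior local maximum at some $p_0\in \mathrm{int}(M)$. Hopf's strong maximum principle applied on a small connected neighborhood $V\subset M$ of $p_0$ forces $h|_M$ to be constant on $V$. In particular $\nabla^M h\equiv 0$ on $V$, so $\partial_z=\nu\eta$ on $V$; since $|\partial_z|=1$ this yields $\nu^2\equiv 1$ on $V$. On the other hand, the local constancy of $h|_M$ gives $\Delta^M h\equiv 0$ on $V$, which by the identity just derived is equivalent to $\nu\equiv 0$ on $V$. These two conclusions are incompatible, providing the desired contradiction. The only delicate point in this plan is fixing the sign of the shape-operator/Laplacian identity consistently with the paper's convention $H_M=\nu$; once that bookkeeping is in place, the argument is entirely formal and no geometric obstacle arises.
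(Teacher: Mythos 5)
Your proof is correct, but it follows a genuinely different route from the paper's. The paper argues pointwise at the putative interior maximum $p$: there the tangent plane is horizontal, so $\nu_p=\pm 1$ and hence $H_M(p)=\pm 1$ by the soliton equation, while $M$ lies locally on one side of the minimal slice $\H^2\times\{p_3\}$ and touches it at $p$, so the mean curvature comparison principle forces the opposite sign for $H_M(p)$ --- a contradiction obtained from a single second-order comparison, with no PDE machinery beyond that. You instead globalize via the identity $\Delta_M h=2H_M\nu=2\nu^2\geq 0$ (the very identity the paper itself invokes later, in the divergence-theorem proof that no closed solitons exist) and then apply Hopf's strong maximum principle to force local constancy of $h$, after which the incompatibility $\nu^2\equiv 1$ versus $\nu\equiv 0$ finishes the job; equivalently, a locally horizontal piece of $M$ would be minimal yet have $H_M=\pm1$. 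Both arguments are sound and purely local (neither actually uses compactness, which only serves to make the statement nonvacuous). Your version buys a pleasant unification with the closed-soliton nonexistence proof and avoids having to set up the comparison surface, at the cost of invoking the strong maximum principle where the paper gets away with a one-point curvature comparison; your care with the sign convention in $2H_M=-\sum_i\langle\nabla_{e_i}\eta,e_i\rangle$ is exactly the bookkeeping needed, and it is consistent with the paper's normalization $H_M=\nu$.
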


\begin{proof}
The proof will be done by contradiction. Suppose that in some $p\in M$, the height function has a local maximum. This implies that there exists a neighbourhood  $U_p$ of $p$ in $M$ such that $h(U_p)\leq h(p)=p_3$, where $h$ is the height function of $M$. In this situation, $U_p$ lies below the horizontal plane $\H^2\times\{p_3\}$. Let $\eta:M\rightarrow\S^2$ be a unit normal vector field to $M$. By hypothesis, $\eta_p=\pm 1$. If $\eta_p=1$, then $M$ has positive mean curvature equal to $1$ at $p$. But $M$ lies locally below $\H^2\times\{p_3\}$ which is a minimal surface and can be oriented upwards without changing the mean curvature. This is a contradiction with the mean curvature comparison principle. If $\eta_p=-1$, we orient $\H^2\times\{p_3\}$ downwards to arrive to the same contradiction.
\end{proof}

Notice that the height function of a translating soliton can achieve a local (or global) minimum, see for example the  bowl soliton or the translating catenoids.

The last theorem in this section has also a counterpart for constant mean curvature surfaces in $\R^3$, and is an important open problem in this theory. It is known that if a compact surface $M\subset\R^3$ with constant mean curvature $H$ and boundary $\partial M$ a circle, lies at one side of the plane $P$ containing the boundary, then $M$ is invariant under rotations around the axis centred at the center of $\partial M$ and orthogonal to $P$, and thus is a part of a sphere of radius $1/H$. However, if the hypothesis on the surface lying at one side of the plane that contains the boundary fails, then the theorem is not known to be true or not. According to Proposition \ref{hmaximo} this cannot happen for translating solitons, and thus compact pieces of the  bowl soliton are unique in the following sense:

\begin{teo}
Let $\Gamma\subset\H^2\times\{t_0\}$ be a closed, embedded curve invariant under rotations around a vertical axis $l$ of $\h2r$. Let $M$ be a compact, embedded translating soliton with boundary $\partial M=\Gamma$. Then, $M$ is rotationally symmetric and, up to translations, is a piece of the  bowl soliton.
\end{teo}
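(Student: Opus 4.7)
The strategy combines Proposition \ref{hmaximo} with an Alexandrov moving plane argument modeled on the proof of Theorem \ref{unicidadbowl}. First, by Proposition \ref{hmaximo} the height function $h$ of $M$ has no interior local maximum, so its maximum on $M$ is attained on $\partial M=\Gamma$ and equals $t_0$; hence $M\subset\overline{\H^2\times(-\infty,t_0]}$. This is the translating-soliton analogue of the ``one-sided boundary'' hypothesis in the classical CMC problem for surfaces with circular boundary, and it is precisely what allows an Alexandrov reflection scheme anchored at $\Gamma$ to be initialized.

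Next, fix an arbitrary horizontal unit vector $v$ at a point of $l$ and consider the family $\{\Pi_v(s)\}_{s\in\R}$ of vertical planes orthogonal to $v$ introduced in the proof of Theorem \ref{unicidadbowl}, with $\Pi_v(0)$ the vertical plane through $l$, together with the associated sets $M_+(s)$, $M_-(s)$, and $M_+^*(s)$. By compactness of $M$, $\Pi_v(s)\cap M=\varnothing$ for $s$ sufficiently large, so the set
\[
\mathcal{A}_v=\{\, s\geq 0 \ :\ M_+^*(s')\geq M_-(s')\ \text{for all}\ s'\geq s\,\}
\]
is nonempty; let $s_0:=\inf\mathcal{A}_v\geq 0$. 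I would argue that $s_0=0$. If instead $s_0>0$, a first contact between $M_+^*(s_0)$ and $M_-(s_0)$ must occur at $s_0$, either at an interior point with matching unit normals or at a boundary point with matching unit normals and interior conormals. Theorem \ref{tangency} then forces $M_+^*(s_0)$ and $M_-(s_0)$ to agree, so $\Pi_v(s_0)$ becomes a plane of reflective symmetry of $M$ and therefore of $\Gamma$. But $\Gamma$ is a horizontal circle centered on $l$, and its only symmetry planes among vertical planes orthogonal to $v$ are those containing $l$, i.e.\ $\Pi_v(0)$; this contradicts $s_0>0$. Hence $s_0=0$ and $M_+^*(0)\geq M_-(0)$. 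Replacing $v$ by $-v$ gives $M_-^*(0)\geq M_+(0)$, so $\Pi_v(0)$ is itself a plane of reflective symmetry of $M$.

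Since $v$ was arbitrary, $M$ is invariant under every reflection across a vertical plane containing $l$, and so $M$ is rotationally symmetric around $l$. By the classification in Section \ref{rotacionales}, $M$ is, up to vertical translation, a compact piece of either the bowl $\cB$ or of a translating catenoid $\cC_r$. The catenoid alternative is ruled out on topological grounds: the generating curve of $M$ in a vertical half-plane through $l$ meets $\Gamma$ at a single point of that half-plane, and for $M$ to be smooth, compact, embedded, and bounded by the single circle $\Gamma$, the other endpoint of the generating curve must meet $l$ orthogonally (a generating curve with both endpoints on $\Gamma$ would produce either an additional boundary circle or a pinched non-smooth point after rotation). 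This identifies $M$, up to vertical translation, as a compact piece of $\cB$.

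The main obstacle I anticipate is the Alexandrov step itself in the hyperbolic setting: one must verify that the right-hand-side ordering of Theorem \ref{unicidadbowl} transfers to the compact-with-boundary situation and that both the interior and the boundary versions of Theorem \ref{tangency} apply at the first-contact instant $s_0$. Once those technicalities are settled, the rigidity at $s_0=0$ follows cleanly from the rotational symmetry of $\Gamma$, and the topological step identifying $\cB$ is straightforward.
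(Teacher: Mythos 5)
Your proposal is correct and follows essentially the same route as the paper: Proposition \ref{hmaximo} places $M$ on one side of the horizontal plane containing $\Gamma$, Alexandrov reflection in vertical planes combined with the tangency principle (Theorem \ref{tangency}) and the rotational invariance of $\Gamma$ forces every vertical plane through $l$ to be a plane of reflection symmetry, and rotational symmetry then identifies $M$ as a compact piece of the bowl soliton. Your write-up is slightly more explicit than the paper's at two points---the $\pm v$ argument handling the case where no tangency occurs before $s=0$, and the topological exclusion of the translating catenoids in the final identification---but these are refinements of the same argument rather than a different approach.
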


\begin{proof}
The proof will be done by using Alexandrov reflection technique with respect to vertical planes. Without losing generality, after a translation that sends the vertical axis $l$ to the vertical axis $(0,0,t),\ t\in\R$, we may suppose that $\Gamma$ is a circumference centred at the origin with a certain radius. From Proposition \ref{hmaximo}, the translating soliton $M$ lies below the horizontal plane $\H^2\times\{t_0\}$. This consideration is the key that allows us to apply Alexandrov reflection technique, since in the constant mean curvature framework the first contact point may be between an interior and boundary points.

The same notation as in the proof of Theorem \ref{unicidadbowl} will be used here. Let $v$ be an arbitrary, unit horizontal vector, and $\Pi_v$ the vertical plane orthogonal to $v$ passing through the origin. For $s$ big enough, $\Pi_v(s)\cap M=\varnothing$. We start decreasing $s$ until $\Pi_v(s_0)$ intersects $M$ for the first time at a point $p\in M\cap\Pi_v(s_0)$, for some $s_0>0$. Decreasing the parameter $s$, for $s$ close enough to $s_0$ the reflection $M^+_*(s)$ lies inside the interior domain enclosed by $M$. Alexandrov reflection technique stops at some instant $s_1\geq 0$ such that either $M^+_*(s_1)$ is tangent to $M$ at an interior point with the same unit normal; or the intersection between $M^+_*(s_1)$ and $M$ occurs at a boundary point, where their inner conormals agree. In any case, Theorem \ref{tangency} ensures us that the plane $\Pi(s_1)$ is a plane of reflection symmetry of the surface $M$. As the boundary is a circle, the plane $\Pi_v(s_1)$ has to be a plane of reflection symmetry of $\partial M$ as well, and thus $\Pi_v(s_1)$ passes through the center of $\partial M$, which yields $s_1=0$ and $\Pi_v(s_1)\equiv\Pi_v$. Repeating this procedure with all the horizontal directions $v$ we obtain that the surface $M$ is rotationally symmetric around the line passing through the origin, and intersects this axis in an orthogonal way. By uniqueness, $M$ is a compact piece of the translating bowl, as desired.
\end{proof}

\subsection{Non-existence theorems for translating solitons}

In this last section we prove non-existence theorems for translating solitons, assuming some geometric obstructions. The first non-existence result is a straightforward consequence of the divergence theorem, see \cite{Lo}:

\begin{pro}
There do not exist closed (compact without boundary) translating solitons.
\end{pro}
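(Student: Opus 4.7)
The plan is to integrate the soliton identity against the vertical Killing field $\partial_z$ and apply the divergence theorem on $M$. The crucial structural fact I will exploit is that $\partial_z$ is parallel in the Riemannian product $\h2r$; that is, $\nabla_V\partial_z=0$ for every tangent vector $V$. This reduces the intrinsic divergence of the tangential part $\partial_z^\top$ to a pure shape-operator term, which is exactly where the soliton equation $H_M=\nu$ from \eqref{mediapresc} enters.

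Concretely, I would take an orientable closed translating soliton $M$ with unit normal $\eta$, decompose $\partial_z=\partial_z^\top+\nu\,\eta$ along $M$, and compute the intrinsic divergence of $\partial_z^\top$ in an orthonormal tangent frame $\{e_1,e_2\}$. The parallelism of $\partial_z$ kills the ambient derivative of $\partial_z$, leaving only the curvature contribution
\[
\mathrm{div}_M(\partial_z^\top)=-\nu\sum_{i=1}^{2}\langle\nabla_{e_i}\eta,e_i\rangle=2H_M\,\nu.
\]
Substituting the translating soliton equation $H_M=\nu$ then gives $\mathrm{div}_M(\partial_z^\top)=2\nu^2\ge 0$. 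Applying the divergence theorem on the closed surface $M$ yields
\[
0=\int_M\mathrm{div}_M(\partial_z^\top)\,dA_M=2\int_M\nu^2\,dA_M,
\]
which forces $\nu\equiv 0$ on $M$.

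Finally, $\nu\equiv 0$ means that $\partial_z$ is tangent to $M$ at every point, so through each $p\in M$ the entire integral curve of $\partial_z$—a vertical line of $\h2r$—must be contained in $M$, contradicting the compactness of $M$. The argument is almost a one-liner, and the only mild obstacle is keeping track of sign conventions in the divergence formula. An alternative would be to invoke Proposition~\ref{hmaximo} directly: the height function on a closed $M$ would attain an interior maximum, which is forbidden; but since the reference \cite{Lo} is explicitly cited, I would present the divergence-theorem version given above.
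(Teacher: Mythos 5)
Your proposal is correct and is essentially the paper's own argument: since $\nabla^M h=\partial_z^\top$, your identity $\mathrm{div}_M(\partial_z^\top)=2H_M\nu$ is exactly the formula $\Delta_M h=2H_M\nu$ that the paper integrates, and both proofs conclude $\int_M\nu^2=0$ from the divergence theorem. The only cosmetic difference is the final contradiction (you use the complete vertical integral curves of $\partial_z$ inside $M$, the paper says $M$ lies in a vertical plane), which is an equally valid way to contradict compactness.
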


\begin{proof}
Suppose that $M$ is a closed translating soliton, and we will arrive to a contradiction.

It is known that the height function $h:M\rightarrow\R$ on an immersed surface $M$ in $\h2r$ satisfies the PDE $\Delta_M h=2 H_M \nu$, where $\Delta_M$ is the Laplace-Beltrami operator in $M$. As $M$ is a translating soliton, the mean curvature is equal to $H_M=\nu$. Integrating and applying the divergence theorem yields
$$
0=\int_M \Delta_M h=2\int_M \nu^2,
$$
and thus $\nu(p)=0$ for every $p\in M$. This implies that $M$ is contained in a vertical plane, contradicting the fact that $M$ is closed.
\end{proof}

\begin{obs}
The previous Proposition can be also proved by considering the closed  soliton and a vertical plane tangent to the  soliton (such a plane exists by compactness) as minimal surfaces in the conformal space $\big(\h2r,e^h\langle\cdot,\cdot\rangle\big)$, and then arriving to a contradiction by applying Theorem \ref{tangency}. However, for applying this theorem we have to invoke Hopf's maximum principle, which is a more powerful theorem than the divergence theorem.
\end{obs}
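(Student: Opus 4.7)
The plan is to run the argument described in the observation: reinterpret the closed soliton and a suitable vertical plane as minimal surfaces of the conformal ambient $(\h2r,e^h\langle\cdot,\cdot\rangle)$, and then apply the tangency principle to reach a contradiction with compactness. Assume for contradiction that $M$ is a closed (compact, without boundary) translating soliton. By item $2$ of Theorem \ref{car3}, $M$ is a minimal surface in $(\h2r,e^h\langle\cdot,\cdot\rangle)$. Moreover, every vertical plane $\gamma\times\R$ (with $\gamma\subset\H^2$ a complete geodesic) is a translating soliton, as already noted in \eqref{hypvertical}, and hence is also a minimal surface in the same conformal ambient.

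Next I would produce a vertical plane tangent to $M$ from outside. The projection $\pi_1(M)\subset\H^2$ is compact, so it is contained in some hyperbolic disk. Choose a complete geodesic $\gamma_0$ of $\H^2$ disjoint from $\pi_1(M)$, and consider the one-parameter family of geodesics obtained by flowing $\gamma_0$ toward $\pi_1(M)$ (e.g.\ by hyperbolic translations along a geodesic perpendicular to $\gamma_0$). By compactness there is a first parameter $s_0$ at which the deformed geodesic $\gamma$ meets $\pi_1(M)$; at any first-contact point $\bar p\in\gamma\cap\pi_1(M)$, $\pi_1(M)$ lies entirely in one of the two closed half-planes bounded by $\gamma$. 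Lifting to $\h2r$, the vertical plane $\Pi=\gamma\times\R$ touches $M$ at a point $p\in\pi_1^{-1}(\bar p)\cap M$, and $M$ lies on one side of $\Pi$ in a neighborhood of $p$.

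At $p$ the tangent plane $T_pM$ must contain the vertical direction $\partial_z$ (otherwise $\pi_1$ would be a local diffeomorphism near $p$ and $\pi_1(M)$ would cross $\gamma$), so $T_pM=T_p\Pi$. Choose the unit normal of $M$ at $p$ to agree with the unit normal of $\Pi$ at $p$; both are orthogonal to the same tangent plane, so this is possible. Both surfaces are (connected components of) translating solitons in $\h2r$, share the point $p$ in their interior with coincident normals, and $M$ lies locally on one side of $\Pi$. The interior version of Theorem \ref{tangency} then forces $M$ and $\Pi$ to coincide on a neighborhood of $p$, and since $M$ is complete (being compact) and $\Pi$ is complete, the same theorem gives $M=\Pi$. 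This is the desired contradiction, since $\Pi=\gamma\times\R$ is non-compact while $M$ is compact.

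The only delicate point in the plan is justifying the existence of the supporting vertical plane and ensuring that the contact at $p$ is really a one-sided tangency in $\h2r$ (not a transverse crossing forced by some vertical behavior of $M$). The argument above handles this because the obstruction $T_pM\ne T_p\Pi$ would contradict $\gamma$ being a first-contact support geodesic for the compact set $\pi_1(M)$; once the tangency is established, the rest is a direct application of Theorem \ref{tangency}, which is exactly the Hopf-type maximum principle the observation refers to.
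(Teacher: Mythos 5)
Your proposal is correct and follows exactly the route the Observation sketches: you realize $M$ and a vertical plane $\gamma\times\R$ as minimal surfaces of the conformal ambient, produce the supporting plane by sweeping geodesics toward the compact projection $\pi_1(M)$, and conclude with the interior tangency principle (Theorem \ref{tangency}), contradicting compactness since $\gamma\times\R$ is non-compact. The only detail worth tightening is the justification that $T_pM=T_p\Pi$ at the contact point: this follows most directly from the one-sidedness you already established (otherwise $M$ would cross $\Pi$ near $p$), rather than from the local-diffeomorphism argument alone, which only yields $\partial_z\in T_pM$.
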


Now we prove a height estimate for compact translating solitons with boundary contained in a horizontal plane. Before announcing the result, we will introduce some previous notation that will be useful. Let $\sigma$ be a positive constant. We will denote by $\cB(\sigma)$ to the compact piece of the  bowl soliton that has the circumference $C(0,\sigma)$, centred at the origin and with radius $\sigma$, as boundary. It suffices to intersect $\cB$ with a solid vertical cylinder with axis passing through the origin and radius $\sigma$, and then translate that compact piece in a way that the boundary lies inside the horizontal plane $\H^2\times\{0\}$. The distance from the vertex of $\cB(\sigma)$ to that horizontal plane $\H^2\times\{0\}$ will be denoted by $\tau(\sigma)$. Denote by $\mathcal{Z}_s$ to the flow of the vertical Killing vector field $\partial_z$, which consists on the vertical translations, and let us write by $\cB(\sigma,s)$ to the image of $\cB(\sigma)$ under $\mathcal{Z}_s$.

Also, we state a proposition that has interest on itself, and for the sake of clarity we expose its proof outside the main theorem.

\begin{pro}\label{inslab}
Let $M$ be a compact translating soliton with boundary $\Gamma=\partial M$. If $\Gamma$ lies between two vertical planes, then the whole  soliton $M$ lies between those planes.
\end{pro}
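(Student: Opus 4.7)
The plan is to argue by contradiction using vertical planes, which are themselves translating solitons by the discussion at the end of Section \ref{examples}, as barrier surfaces, and then to invoke the tangency principle (Theorem \ref{tangency}).

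Denote by $\Pi_1,\Pi_2$ the two vertical planes that contain $\Gamma$ in the closed slab $\mathcal{S}$ between them. It suffices to show $M$ lies on the $\mathcal{S}$-side of $\Pi_1$, the argument for $\Pi_2$ being identical. My first step is to construct a sweeping family $\{\Pi_1(s)\}_{s\geq 0}$ of vertical planes by applying the one-parameter group of hyperbolic translations of $\h2r$ along the horizontal geodesic orthogonal to $\Pi_1$ and pointing away from $\mathcal{S}$. Since ambient translations are isometries, each $\Pi_1(s)$ is again of the form $\gamma_s\times\R$ with $\gamma_s\subset\H^2$ a geodesic, hence a vertical plane and a translating soliton. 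By compactness of $M$, $\Pi_1(s)\cap M=\varnothing$ for all $s$ large enough, so I may set $s_*:=\sup\{s\geq 0:\ \Pi_1(s)\cap M\neq\varnothing\}<\infty$.

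If the conclusion failed, some point of $M$ would lie strictly on the side of $\Pi_1$ opposite to $\mathcal{S}$, forcing $s_*>0$, and by continuity $\Pi_1(s_*)\cap M\neq\varnothing$ with $M$ locally on the $\mathcal{S}$-side of $\Pi_1(s_*)$. The key observation is that any $p\in\Pi_1(s_*)\cap M$ is automatically an interior point of $M$: since $s_*>0$, the plane $\Pi_1(s_*)$ is strictly separated from $\mathcal{S}\supset\Gamma$, so $p\notin\Gamma$. After orienting the normal of $\Pi_1(s_*)$ to agree with the normal of $M$ at $p$, the interior version of Theorem \ref{tangency} yields that $M$ and $\Pi_1(s_*)$ coincide on a neighborhood of $p$.

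To finish, I would upgrade this local coincidence to a global one by analyticity: Theorem \ref{car3} realizes $M$ as a minimal surface in the real-analytic Riemannian manifold $(\h2r,e^h\langle\cdot,\cdot\rangle)$, so $M$ is real-analytic, and the subset of $M$ where it coincides with $\Pi_1(s_*)$ is both open and closed. By connectedness of $M$ we obtain $M\subset\Pi_1(s_*)$, which contradicts $\Gamma\subset\mathcal{S}$ together with $s_*>0$. The only delicate point I expect in this plan is precisely the guarantee that the first contact at $s_*$ occurs at an interior point of $M$; this is exactly where the hypothesis $\Gamma\subset\mathcal{S}$ enters, via the strict separation produced by $s_*>0$.
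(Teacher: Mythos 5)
Your proposal is correct and follows essentially the same strategy as the paper's proof: sweep the far side of the offending plane with parallel vertical planes (which are translating solitons), locate a last contact point, note that it must be interior because $\Gamma$ stays in the slab, and invoke the tangency principle of Theorem \ref{tangency} to force $M$ to coincide with a vertical plane, a contradiction. The only cosmetic difference is that you make the local-to-global continuation explicit (via analyticity and an open--closed argument), a step the paper leaves implicit.
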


\begin{proof}
Suppose that $M$ has points outside one of the parallel planes, name it $P$. Denote by $P^+$ the component such that $\Gamma\subset P^+$, and by $P^-$ the other component. As $M$ and $\Gamma\subset P^+$, then $M^-=M\cap P^-$ is a compact surface with boundary in $P$. Let $p\in M^-$ be the point with further distance from $M^-$ to $\Pi$. On the one hand, it is clear that $\nu(p)=0$ and thus $H_M(p)=0$. On the other hand, consider vertical planes $P_\lambda$ parallel to $P$ contained in $P^-$ and such that $P_\lambda\cap M^-=\varnothing$. Then we move the parameter $\lambda$ in such a way that the planes $P_\lambda$ move towards $P$ until there exists a first instant $\lambda_0$ such that $P_{\lambda_0}$ intersects $M^-$ precisely at $p$. This contradicts Theorem \ref{tangency} since both $P_{\lambda_0}$ and $M$ are minimal surfaces in the conformal space $\big(\h2r,e^h\langle\cdot,\cdot\rangle\big)$ and thus they should agree, contradicting the fact that $M$ is compact.
\end{proof}

Now we stand in position to formulate the height estimate for compact translating solitons.

\begin{teo}
Let $\Gamma$ be a closed curve of diameter $\sigma$ contained in a horizontal plane $\H^2\times\{t_0\}$ and let $M$ be a compact, connected translating soliton whith boundary $\partial M=\Gamma$. Then, for all $p\in M$, the distance from $p$ to $\H^2\times\{t_0\}$ is less or equal than $\tau(2\sigma)$, where $\tau(\sigma)$ is the constant defined above.
\end{teo}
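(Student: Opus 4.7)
I argue by contradiction, sliding a compact piece of the bowl soliton vertically upward from below $M$ until first contact and invoking the interior tangency principle of Theorem \ref{tangency}. After a vertical translation I assume $t_0 = 0$, so by Proposition \ref{hmaximo} we have $M \subset \H^2 \times (-\infty, 0]$ with $M \cap \{h = 0\} = \Gamma$. Suppose for contradiction that the minimum of the height function on $M$, attained at some $p_0 \in M$, equals $-h_0$ with $h_0 > \tau(2\sigma)$. Applying Proposition \ref{inslab} to every vertical plane leaving $\Gamma$ on one side forces $\pi_1(M)$ to lie in the geodesic convex hull of $\pi_1(\Gamma)$; since the latter has hyperbolic diameter at most $\sigma$, this hull lies inside $\overline{D(q_\Gamma,\sigma)}$ for any $q_\Gamma \in \pi_1(\Gamma)$. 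Setting $q := \pi_1(p_0)$, the triangle inequality then yields $\pi_1(\Gamma) \subset \overline{D(q,\sigma)}$ and $\pi_1(M) \subset \overline{D(q, 2\sigma)}$.

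Fix a small $\varepsilon > 0$ for which $\pi_1(\Gamma) \cup \pi_1(M) \subset D(q,(2+\varepsilon)\sigma)$ strictly, and let $\cB_s := \cB((2+\varepsilon)\sigma, s)$ be the compact bowl piece with rotation axis the vertical line through $q$, boundary circle in $\H^2 \times \{s\}$, and vertex at $(q, s - \tau((2+\varepsilon)\sigma))$. Writing $F$ for its radial profile, $\cB_s$ is the graph $z = s - \tau((2+\varepsilon)\sigma) + F(r(x))$ over $\overline{D(q,(2+\varepsilon)\sigma)}$. As $s \to -\infty$ the bowl lies entirely below $M$, so $s_* := \inf\{s : \cB_s \cap M \neq \varnothing\}$ is finite, and by the minimality of $s_*$ every point $(x,z) \in M$ satisfies $z \geq \cB_{s_*}(x)$, with equality at any contact point $p_* = (x_*, z_*)$. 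Evaluating the defining identity for $s_*$ at $p_0$ yields $s_* \leq -h_0 + \tau((2+\varepsilon)\sigma)$, which is strictly negative for $\varepsilon$ small since $h_0 > \tau(2\sigma)$; on the other hand, if $p_* \in \Gamma$ then $z_* = 0$ would give $s_* = \tau((2+\varepsilon)\sigma) - F(r(x_*)) > 0$, because $r(x_*) \leq \sigma < (2+\varepsilon)\sigma$ and $F$ is strictly increasing. Since $\partial \cB_{s_*}$ projects to $C(q,(2+\varepsilon)\sigma)$ disjointly from $\pi_1(M)$, the contact point $p_*$ must lie in the interior of both $M$ and $\cB_{s_*}$, with $\cB_{s_*}$ locally below $M$.

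Theorem \ref{tangency} then produces a neighborhood of $p_*$ on which $M$ and $\cB_{s_*}$ coincide. Since both satisfy the quasi-linear elliptic translator PDE, whose solutions are real analytic, and $\mathrm{int}(M)$ is connected (as $M$ is a compact connected surface with a single boundary component), this local identification propagates to all of $\mathrm{int}(M)$; in particular $M$ is a graph over $\pi_1(M)$ agreeing with $\cB_{s_*}$. The boundary condition $h \equiv 0$ on $\Gamma$ then forces $F$ to take a constant value on $r(\pi_1(\Gamma))$, so by strict monotonicity $\pi_1(\Gamma) \subset C(q, r_0)$ for some $r_0 > 0$; since $\Gamma$ is a closed curve this inclusion is an equality, making $\Gamma$ a circle of diameter $2r_0 = \sigma$. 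Equating the two expressions for $s_*$ gives $h_0 = F(\sigma/2)$, and letting $\varepsilon \to 0$ we obtain $h_0 \leq \tau(\sigma/2) < \tau(2\sigma)$, contradicting the hypothesis $h_0 > \tau(2\sigma)$. The main subtle point is precisely the sign comparison $-h_0 + \tau((2+\varepsilon)\sigma) < 0 < \tau((2+\varepsilon)\sigma) - F(r(x_*))$, which is what rules out the first contact occurring on $\Gamma$ and thereby activates the interior tangency principle.
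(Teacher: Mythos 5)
Your proof is correct and follows essentially the same strategy as the paper: slide a compact piece of the bowl soliton vertically until first contact with $M$ and invoke the tangency principle of Theorem \ref{tangency} at an interior touching point. You are in fact somewhat more careful than the paper at the two places it glosses over, namely confining $\pi_1(M)$ horizontally via Proposition \ref{inslab} and ruling out a first contact along $\Gamma$ by the explicit sign comparison on $s_*$.
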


\begin{proof}
Let $\sigma$ be the diameter of $\Gamma$. We can apply a translation $T$ to $\Gamma$ such that $T\big(\Gamma\big)$ lies inside the disk $D\big(0,2\sigma\big)$. For saving notation we will just denote $T\big(\Gamma\big)$ by $\Gamma$. Consider now the compact piece $\cB(2\sigma)$. By Proposition \ref{hmaximo} we know that $M$ lies below the horizontal plane $\H^2\times\{0\}$. As $\Gamma\subset D\big(0,2\sigma\big)$, initially the surface $M$ is inside the mean convex region enclosed by $\cB(2\sigma)$. We assert that the entire surface $M$ lies strictly in this region. Indeed, suppose that $\cB(2\sigma)$ and $M$ have non-empty intersection. This intersection has to be transversal since otherwise we would have by Theorem \ref{tangency} that $M=\cB(2\sigma)$ and thus $\Gamma=C\big(0,2\sigma\big)$, contradicting the fact that $\Gamma$ has diameter $\sigma$. Translate downwards the graph $\cB(2\sigma)$ until $\cB(2\sigma,t_0)\cap M=\varnothing$, for $t_0$ small enough. Then move upwards $\cB(2\sigma,t_0)$ by increasing $t_0$ until we reach a first contact point, which has to be an interior point at some horizontal plane $\H^2\times\{t_1\}$. Then, Theorem \ref{tangency} ensures us that $M$ and $\cB(2\sigma,t_1)$ must agree, and thus $\Gamma=C\big((0,0,t_1),2\sigma\big)$. But in the instant of time that both surfaces coincide, their boundaries are in different planes, which is absurd. Thus, $M$ lies inside the mean convex side of $\cB(2\sigma,0)$ and we obtain the desired height estimate.
\end{proof}

The two last theorems give geometric obstructions for the existence of certain translating solitons.

\begin{teo}
There do not exist properly immersed translating solitons in $\h2r$ contained inside a compact vertical cylinder.
\end{teo}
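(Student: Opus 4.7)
The plan is to argue by contradiction using the tangency principle, comparing $M$ with a vertical plane that first touches it. Recall from the end of Section \ref{examples} that every vertical plane $\Pi=\gamma\times\R$, with $\gamma\subset\H^2$ a complete geodesic, is itself a translating soliton (it is totally geodesic in $\h2r$ and satisfies $\nu\equiv 0$).

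Suppose $M$ is a properly immersed translating soliton contained in a compact vertical cylinder $\mathcal{C}=\overline{D(p_0,R_0)}\times\R$. Since closed translating solitons do not exist by the previous proposition, $M$ cannot be compact and therefore must be unbounded in the vertical direction. Fix a horizontal geodesic $\sigma\subset\H^2\times\{0\}$ passing through $p_0$ and let $\{\Pi_s\}_{s\in\R}$ be the one-parameter family of vertical planes orthogonal to $\sigma'(s)$ at $\sigma(s)$; these foliate $\h2r$, and for $|s|>R_0$ the plane $\Pi_s$ is disjoint from $\mathcal{C}$, hence from $M$. Define
\[
s^{*}=\sup\{s\in\R\,:\,\Pi_s\cap M\neq\varnothing\}\in[-R_0,R_0].
\]
The aim is to produce a contact point $p^{*}\in M\cap\Pi_{s^{*}}$: at such a point $M$ lies locally on the side $\{s\leq s^{*}\}$ of $\Pi_{s^{*}}$, so the two translating solitons are tangent at $p^{*}$ and, after choosing orientations, their unit normals coincide. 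Theorem \ref{tangency} then forces $M\equiv\Pi_{s^{*}}$, which is absurd since a vertical plane is horizontally unbounded and cannot be contained in $\mathcal{C}$.

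The main obstacle is justifying that the supremum $s^{*}$ is actually attained, because $M$ is non-compact and a sequence of near-contact points may escape to infinity in the $z$-direction. I would overcome this by exploiting the invariance of the translator equation under vertical translations. Pick $s_n\nearrow s^{*}$ with $p_n\in\Pi_{s_n}\cap M$; the translated surfaces $M_n:=M-h(p_n)\partial_z$ are again properly immersed translating solitons contained in $\mathcal{C}$, and the corresponding contact points now lie in the compact slab $\overline{D}\times\{0\}$. Invoking the characterization of translators as minimal surfaces in the conformal space $\big(\h2r,e^{h}\langle\cdot,\cdot\rangle\big)$ given by Theorem \ref{car3}, standard smooth subsequential compactness for minimal surfaces produces a limit $M_\infty\subset\mathcal{C}$ which is a complete translating soliton passing through some $p^{*}\in\Pi_{s^{*}}\cap(\overline{D}\times\{0\})$. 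Since the planes $\Pi_s$ are invariant under vertical translation and $\Pi_s\cap M=\varnothing$ for every $s>s^{*}$, the same holds for $M_\infty$, so $M_\infty$ sits locally on the correct side of $\Pi_{s^{*}}$ at $p^{*}$. Applying Theorem \ref{tangency} to $M_\infty$ and $\Pi_{s^{*}}$ yields the desired contradiction.
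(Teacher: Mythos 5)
Your strategy --- sweeping with the foliation of vertical planes $\Pi_s$ and invoking Theorem \ref{tangency} at a first-contact plane $\Pi_{s^*}$ --- is genuinely different from the paper's, and you have correctly identified its weak point: since $M$ is vertically unbounded and each $\Pi_s$ meets the cylinder in a non-compact vertical strip, the near-contact points may escape to $z=\pm\infty$, so the supremum $s^*$ need not be attained on $M$. The problem is that your repair of this gap does not work as stated. The assertion that ``standard smooth subsequential compactness for minimal surfaces'' applied to the vertical translates $M_n=M-h(p_n)\partial_z$ produces a smooth limit translator $M_\infty$ through $p^*$ is precisely the unproved step. Smooth subsequential compactness requires uniform local area bounds and uniform bounds on the second fundamental form along the sequence; the surfaces $M_n$ sample $M$ near heights $h(p_n)\to\pm\infty$, where you have no control on either quantity ($M$ is merely properly immersed --- not embedded, not stable, not graphical). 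Without such estimates the $M_n$ may converge only as varifolds or laminations, possibly with multiplicity or with a singularity at $p^*$, and Theorem \ref{tangency}, which is a pointwise Hopf-type maximum principle for two smooth surfaces tangent at a point, cannot be applied to the limit. This is exactly the ``maximum principle at infinity'' difficulty that obstructs half-space theorems, and the paper explicitly warns at the end of Section \ref{asintotico} that Hoffman--Meeks-type arguments do not transfer to this setting.

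The paper avoids the issue by choosing a different sweeping family: the translating catenoids $\{\cC_r\}_{r>0}$ of Section \ref{rotacionales}, rotated around the axis of the cylinder. The decisive advantage is that each $\cC_r$ meets the solid cylinder of radius $r_0$ in a \emph{compact} piece (a neighbourhood of its neck), so by properness of $M$ the first contact as $r$ decreases is an honest interior tangency occurring in a compact region, and Theorem \ref{tangency} applies directly; since no $\cC_r$ is contained in a vertical cylinder, this gives the contradiction. If you wish to keep a plane-sweeping argument you would have to supply genuine curvature and area estimates for immersed translating solitons --- a substantial additional input --- or else replace the planes by barriers whose intersection with the cylinder is compact.
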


\begin{proof}
Suppose that $M$ is a properly immersed translating soliton lying inside a vertical cylinder of radius $r_0$, and denote it by $C(r_0)$. After a translation we can suppose that the axis of the cylinder is the straight line $l=(0,0,t),\ t\in\R$. Consider the family of translating catenoids $\{\cC_r\}_r$ rotated around $l$. For $r>r_0$ each $\cC_r$ lie inside the non-compact component of $\overline{\h2r-C(r_0)}$. Now we start decreasing the parameter $r$ until we reach an interior tangency point between $M$ and $\cC_{r_1}$, for some $r_1\leq r_0$. This is a contradiction with Theorem \ref{tangency} since $M$ and $\cC_{r_1}$ would agree, but none $\cC_r$ lies inside a vertical cylinder.
\end{proof}

\noindent The author was partially supported by MICINN-FEDER, Grant No. MTM2016-80313-P and Junta de Andalucía Grant No. FQM325.
\end{document}